\newtheorem{theorem}{Theorem}[section]
\newtheorem{lemma}[theorem]{Lemma}
\theoremstyle{definition}
\newtheorem{definition}[theorem]{Definition}
\theoremstyle{remark}
\newcommand{\norm}[1]{\left\lVert#1\right\rVert}
\newcommand{\R}{\mathbb{R}}
\newcommand{\N}{\mathbb{N}}
\newcommand{\defeq}{\mathrel{\mathop:}=}
\newcommand{\eqdef}{\mathrel{\mathop=}:}
\newcommand{\uu}{\boldsymbol{u}}
\newcommand{\uuu}{\boldsymbol{\tilde{u}}}
\newcommand{\DD}{\boldsymbol{D}}
\newcommand{\BB}{\boldsymbol{B}}
\newcommand{\CC}{\boldsymbol{C}}
\newcommand{\MM}{\boldsymbol{M}}
\newcommand{\vv}{\boldsymbol{v}}
\newcommand{\ppsi}{\boldsymbol{\psi}}
\newcommand{\pphi}{\boldsymbol{\phi}}
\newcommand{\SSS}{\boldsymbol{S}}
\newcommand{\GGG}{\boldsymbol{G}}
\newcommand{\ww}{\boldsymbol{w}}
\newcommand{\dx}{\,\mathrm{d}x}
\newcommand{\dt}{\,\mathrm{d}t}
\newcommand{\ds}{\,\mathrm{d}s}
\newcommand{\dxi}{\,\mathrm{d}\xi}
\newcommand{\DDD}{\overline{\DD}}
\numberwithin{equation}{section}
\def\ocirc#1{\ifmmode\setbox0=\hbox{$#1$}\dimen0=\ht0
    \advance\dimen0 by1pt\rlap{\hbox to\wd0{\hss\raise\dimen0
    \hbox{\hskip.2em$\scriptscriptstyle\circ$}\hss}}#1\else
    {\accent"17 #1}\fi}
\begin{document}

\title{Upper and lower bounds of convergence rates for strong solutions of the generalized Newtonian fluids with non-standard growth conditions}

\author{Jae-Myoung Kim\thanks{Department of Mathematics Education, Andong National University, Andong, 36729, Korea (Republic of). Email: \tt{jmkim02@andong.ac.kr}}
~and ~Seungchan Ko\thanks{Department of Mathematics, The University of Hong Kong, Pokfulam Road, Hong Kong. Email: \tt{scko@maths.hku.hk}}}

\date{~}

\maketitle

~\vspace{-1.5cm}

\begin{abstract}
We consider the motion of an incompressible shear-thickening power-law-like non-Newtonian fluid in $\R^3$ with a variable power-law index. This system of nonlinear partial differential equations arises in mathematical models of electrorheological fluids. The aim of this paper is to investigate the large-time behaviour of the difference $\uu-\uuu$ where $\uu$ is a strong solution of the given equations with the initial data $\uu_0$ and $\uuu$ is the strong solution of the same equations with perturbed initial data $\uu_0+\ww_0$. The initial perturbation $\ww_0$ is not required to be small, but is assumed to satisfy certain decay condition. In particular, we can show that
\[(1+t)^{-\frac{\gamma}{2}}\lesssim \|\uuu(t)-\uu(t)\|_2\lesssim(1+t)^{-\frac{\gamma}{2}},\]
for sufficiently large $t>0$, where $\gamma\in(2,\frac{5}{2})$. The proof is based on the observation that the solution of the linear heat equation describes the asymptotic behaviour of the solutions of the electrorheological fluids well for sufficiently large time $t>0$, and the generalized Fourier splitting method with an iterative argument. Furthermore, it will also be discussed that the argument used in the present paper can improve the previous results for the generalized Newtonian fluids with a constant power-law index.
\end{abstract}

%whose viscosity is significantly changed by the surrounding electro-magnetic field

\noindent{\textbf{Keywords:} Non-Newtonian fluid, variable exponent, electrorheological fluid, optimal convergence rates, upper and lower bounds}

\smallskip

\noindent{\textbf{AMS Classification:} 76A05, 76D05, 35B40}

\begin{section}{Introduction}
In this paper, we aim to investigate the large-time behaviour for solutions of a system of nonlinear partial differential equations (PDEs) describing the rheological response of electrorheological fluids. The electrorheological fluids, which have recently gained increasing attention, are a special type of smart fluids of high technological interest and are  characterized by their ability to change their rheological properties when in presence of an surrounding electromagnetic field: When it is disposed to an electro-magnetic field, the viscosity exhibits a significant change.  For example, there exist some types of electrorheological fluids whose viscosity varies by a factor of $1000$ as a response to the application of an electromagnetic field within 1ms.  This property can be exploited in a wide range of scientific and industrial applications,
 including for instance, shock absorbers, clutches and actuators. 

In the present paper, we shall consider the reduced model for the incompressible electrorheological fluids consisting of the following system of PDEs:
\begin{alignat}{2}
	\partial_t\uu+(\uu\cdot\nabla)\uu-{\rm{div}}\,\SSS(\DD\uu)&=-\nabla \pi+\boldsymbol{f}\qquad &&{\rm{in}}\,			\,Q_T=(0,T)\times\R^d,\label{eq1}\\
	{\rm{div}}\,\uu&=0\qquad &&{\rm{in}}\,\,Q_T=(0,T)\times\R^d,\label{eq2}
\end{alignat}
where the Cauchy stress tensor is of the form
\begin{equation}\label{ST}
	\SSS(\DD\uu)=(1+|\DD\uu|^2)^{\frac{p(t,x)-2}{2}}\DD\uu.
\end{equation}
In fact, the exponent $p(\cdot)$ depends on the magnitude of the electric field $|\boldsymbol{E}|$. Since the electric field itself is a solution to the quasi–static Maxwell equations and is not influenced by the motion of the fluids, we may consider $p(\cdot)$ as a given function and restrict ourselves to the mathematical study of the equations \eqref{eq1}-\eqref{ST}.
In the above system of equations, $\uu:Q_T\rightarrow\R^d$, $p:Q_T\rightarrow\R$, denote the velocity field and pressure respectively, and $\DD\uu$ is the symmetric velocity gradient, i.e. $\DD\uu=\frac{1}{2}(\nabla\uu+(\nabla\uu)^T)$. Here we prescribe the initial condition
\begin{equation}\label{initial}
	\uu(0,x)=\uu_0(x)\quad{\rm{in}}\,\,\R^d.
\end{equation}
Such electrorheological models were studied in \cite{R2000, R2004}, where the mathematical modelling, existence theory and numerical approximation were discussed. 

In this paper, we shall investigate the large-time behaviour of the solutions for the model \eqref{eq1}-\eqref{ST}. Because of its importance in mathematics and physics, the large-time behaviour of viscous incompressible fluids has attracted more attention and has been investigated in many works. For the classical Navier--Stokes equations, there has been an extensive literature; see, e.g., \cite{Schonbek_1, Schonbek_2} where the Fourier splitting method was initially developed, and \cite{kato, ref_1, ref_2, Schonbek_3, Schonbek_4, Zhang_1995, titi, Chen_1991, Han_2015, Han_2016_1, Han_2016_2} for related topics including upper and lower bounds of the decay rates for several types of norms and classes of initial data in various domains.
% In \cite{mag_1, mag_2, mag_3, mag_4}, the authors considered the magnetohydrodynamics equaions which consist of the classical Navier--Stokes equations and Maxwell's equations, and obtained the similar results. 
There also have been a large number of contributions for the non-Newtonian fluid flow model. In \cite{non_1, non_2, Dong, Bae_1999, Dong_2004} examined the decay properties of solutions for the power-law-like non-Newtonian fluid flow models.
%and the similar results for the generalized Navier--Stokes equations coupled with Maxwell's equations were studied in \cite{non_mag_2, non_mag_1}.

On the other hand, for the mathematical study of the algebraic convergence rate for solutions of Navier--Stokes equations under initial perturbations, in \cite{perturb_NS}, the authors showed the algebraic convergence of every solution of the perturbed problem to the solution of the Navier--Stokes equations, within the Morrey spaces framework. Similarly, in \cite{perturb_non}, the authors proved the convergence properites of weak solution to the viscous incompressible generalized Newtonian fluids without the assumption that the initial perturbation is small. As it will be made clear later, our argument used in this paper can improve the result in \cite{perturb_non} so that we can have the similar result with the lower values of $p>0$.

For the mathematical model \eqref{eq1}-\eqref{initial} describing the motion of the incompressible electrorheological fluids,  to the best of our knowledge, the only result for the large-time behaviour of the solutions is $L^2$ decay rates for the strong solution presented in \cite{Ko2022}. As a subsequent study, it is natural and desirable to investigate the asymptotic stability of the solutions of the equations \eqref{eq1}-\eqref{initial}. This paper aims to study the convergence rate for this type of problem. Specifically, the perturbed equations of \eqref{eq1}-\eqref{initial} for the initial perturbation can be written as
\begin{alignat}{2}
	\partial_t\uuu+(\uuu\cdot\nabla)\uuu-{\rm{div}}\,\SSS(\DD\uuu)&=-\nabla {\tilde{\pi}}+\boldsymbol{f}\qquad &&{\rm{in}}		\,\,Q_T,\label{p_eq1}\\
	{\rm{div}}\,\uuu&=0\qquad &&{\rm{in}}\,\,Q_T,\label{p_eq2}\\
	\uuu_0(x)&=\uu(0,x)+\ww(0,x) \qquad&&{\rm{in}}\,\,\R^d.\label{p_initial}
\end{alignat}
The main result of this paper is that for any large initial perturbation satisfying some decay condition, the perturbed solution $\uuu(t,x)$ of \eqref{p_eq1}-\eqref{p_initial} converges to the strong solution $\uu(t,x)$ of the original problem \eqref{eq1}-\eqref{initial} with the optimal upper and lower bounds
\begin{equation}\label{main_result}
C_1(1+t)^{-\frac{\gamma}{2}}\leq\|\uuu(t)-\uu(t)\|_{L^2(\R^3)}\leq C_2(1+t)^{-\frac{\gamma}{2}},
\end{equation}
where $2<\gamma<\frac{5}{2}$. Note that the above convergence result is optimal in the sense that it coincides with the convergence rate of the linear heat equations.

The remaining of the present paper is organized as follows. In Section 2, we discuss some notations and preliminaries which will be used throughout the paper, and present the precise statement of the main theorem. As the proof of the main theorem, in Section 3, we examine the estimate for the upper bound in \eqref{main_result}. Subsequently, in Section 4, we prove the optimal lower bounds for the convergence rate in \eqref{main_result}.
Finally, in Section 5, we present some concluding remarks on the paper.

\end{section}

\begin{section}{Preliminaries and main theorem}
In this section, we first introduce some notations and  discuss preliminaries which will be needed throughout the paper. For two vectors $\boldsymbol{a}$ and $\boldsymbol{b}$, $\boldsymbol{a}\cdot \boldsymbol{b}$ denotes the scalar product; and, similarly, for two tensors $\mathbb{A}$ and
$\mathbb{B}$, $\mathbb{A}:\mathbb{B}$ signifies their scalar product. Throughout the paper, the notation $P\lesssim Q$ means that there exists some constant $C>0$ such that $P\leq CQ$. Also, $C$ denotes a generic positive constant, which may change at each appearance. For $1\leq p\leq\infty$ and $k\in\N$, we mean by $W^{k,p}(\R^d)$ the standard Sobolev space and we denote $H^k(\R^d)=W^{k,2}(\R^d)$. Furthermore, for simplicity, we write $\|\cdot\|_p=\|\cdot\|_{L^p(\R^d)}$ and $\|\cdot\|_{k,p}=\|\cdot\|_{W^{k,p}(\R^d)}$.

 We first recall Korn's inequality (see, for example, Lemma 2.7 in \cite{Cauchy}).
\begin{lemma}
Assume that $1<q<\infty$. Then there exists a positive constant $C>0$ depending on $q$ such that for any $\uu\in W^{1,q}(\R^d)$, we have
\[\|\nabla\uu\|_q\leq C\|\DD\uu\|_q.\]
\end{lemma}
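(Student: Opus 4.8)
The plan is to prove Korn's inequality on $\R^d$ by the classical route: reduce each first-order derivative of $\uu$ to a bounded Fourier-multiplier transform of the components of $\DD\uu$, and then invoke the $L^q$-theory of singular integrals. First I would reduce to $\uu\in C_c^\infty(\R^d;\R^d)$. Since $C_c^\infty(\R^d)$ is dense in $W^{1,q}(\R^d)$ for $1<q<\infty$ and both sides of the asserted inequality depend continuously on $\uu$ in the $W^{1,q}$-norm, the general statement follows from the smooth case by approximation.

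Next I would record the pointwise algebraic identity
\[
\partial_m\partial_k u_i = \partial_m(\DD\uu)_{ik} + \partial_k(\DD\uu)_{im} - \partial_i(\DD\uu)_{mk},
\]
which is immediate from $(\DD\uu)_{ik}=\tfrac12(\partial_k u_i+\partial_i u_k)$. Applying $\partial_m$ and summing over $m$ turns the left-hand side into $\Delta\,\partial_k u_i$, so that
\[
\Delta\,\partial_k u_i = \Delta(\DD\uu)_{ik} + \partial_k\Big(\textstyle\sum_m \partial_m(\DD\uu)_{im}\Big) - \partial_i\Big(\textstyle\sum_m \partial_m(\DD\uu)_{mk}\Big).
\]
Inverting the Laplacian (rigorously justified on the Fourier side for Schwartz functions, the identity then persisting for $C_c^\infty$ data) expresses every first-order derivative of $\uu$ as a finite linear combination of the components $(\DD\uu)_{ab}$ themselves together with terms of the form $(\partial_a\partial_b\Delta^{-1})(\DD\uu)_{cd}$. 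Each operator $\partial_a\partial_b\Delta^{-1}$ is a Fourier multiplier with the homogeneous degree-zero symbol $-\xi_a\xi_b/|\xi|^2$, that is, a composition $R_aR_b$ of two Riesz transforms, so the Mikhlin–Hörmander multiplier theorem (equivalently, the Calderón–Zygmund theory of singular integral operators) yields its boundedness on $L^q(\R^d)$ for every $1<q<\infty$, with operator norm depending only on $q$ and $d$.

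Taking $L^q$-norms in this representation, applying the triangle inequality, and using the boundedness just recalled would then give $\|\partial_k u_i\|_q \lesssim \|\DD\uu\|_q$ for every $i,k$, and summing over the indices yields $\|\nabla\uu\|_q\le C(q,d)\|\DD\uu\|_q$, as claimed. The only non-elementary ingredient — and the step I would regard as the crux — is the $L^q$ bound for the second-order Riesz transforms $R_aR_b$; it is precisely here that the hypothesis $1<q<\infty$ is used, and it explains why the constant $C$ must depend on $q$ and degenerates as $q\to 1^+$ or $q\to\infty$. (Alternatively, one may simply invoke Lemma 2.7 of \cite{Cauchy}, but the argument above is the standard proof and is self-contained modulo the multiplier theorem.)
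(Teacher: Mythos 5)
Your argument is correct, and it is worth noting that the paper itself offers no proof at all: it simply quotes Korn's inequality from the literature (Lemma 2.7 of \cite{Cauchy}), so your write-up supplies the standard self-contained argument that the paper leaves to a citation. The algebraic identity $\partial_m\partial_k u_i=\partial_m(\DD\uu)_{ik}+\partial_k(\DD\uu)_{im}-\partial_i(\DD\uu)_{mk}$ checks out, and contracting with $\partial_m$ and passing to the Fourier side correctly expresses $\widehat{\partial_k u_i}(\xi)$, for $\xi\neq 0$, as a combination of $\widehat{(\DD\uu)}_{ab}(\xi)$ and terms with the symbols $\xi_a\xi_b/|\xi|^2$; since $\{\xi=0\}$ is a null set, the $L^q$ bound for the second-order Riesz transforms $R_aR_b$ (Mikhlin--H\"ormander or Calder\'on--Zygmund) then gives $\|\partial_k u_i\|_q\lesssim\|\DD\uu\|_q$ for smooth compactly supported $\uu$, and your density argument transfers this to all of $W^{1,q}(\R^d)$ because both sides are continuous in the $W^{1,q}$-norm. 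The only point to phrase carefully is the ``inversion of the Laplacian'': as you indicate, one should not invert $\Delta$ as an operator but rather divide the Fourier-transformed identity by $|\xi|^2$ away from the origin, which is exactly what your parenthetical remark does; with that reading the proof is complete, correctly isolates $1<q<\infty$ as the hypothesis needed for the multiplier bound, and explains why the constant depends on $q$ (and $d$) and degenerates at the endpoints, where Korn's inequality is in fact known to fail.
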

We will also use the classical Gronwall's inequality, which is well-known in the theory of differential equations.
\begin{lemma}\label{gronwall}
For any positive continuous functions $f(t),g(t),h(t)$ with $f'(t)\geq0$ for all $t>0$. if they satisfy the inequality
\[g(t)\leq f(t)+\int^t_0g(s)h(s)\ds,\quad\forall t>0,\]
then we have
\[g(t)\leq f(t)\exp\left(\int^t_0h(s)\ds\right),\quad\forall t>0.\]
\end{lemma}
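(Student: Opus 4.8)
The plan is to convert the integral inequality into a linear differential inequality for the accumulated integral and then solve it with an integrating factor. Set $G(t)\defeq\int_0^t g(s)h(s)\ds$; since $g$ and $h$ are continuous, $G$ is continuously differentiable with $G(0)=0$ and $G'(t)=g(t)h(t)$. Multiplying the hypothesis $g(t)\le f(t)+G(t)$ by $h(t)>0$ gives the differential inequality
\[G'(t)\le h(t)G(t)+f(t)h(t),\qquad t>0.\]

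Next I would introduce the positive integrating factor $\mu(t)\defeq\exp\paren{-\int_0^t h(s)\ds}$, for which $\mu'(t)=-h(t)\mu(t)$. Then
\[\frac{\mathrm{d}}{\mathrm{d}t}\paren{\mu(t)G(t)}=\mu(t)\paren{G'(t)-h(t)G(t)}\le\mu(t)f(t)h(t),\]
and integrating over $[0,t]$ with $G(0)=0$ yields $\mu(t)G(t)\le\int_0^t\mu(s)f(s)h(s)\ds$. This is precisely where the monotonicity assumption $f'\ge 0$ is used: since $f(s)\le f(t)$ for all $s\in[0,t]$ and $\mu(s)h(s)=-\mu'(s)$, we obtain
\[\mu(t)G(t)\le f(t)\int_0^t\mu(s)h(s)\ds=f(t)\paren{\mu(0)-\mu(t)}=f(t)\paren{1-\mu(t)}.\]
Hence $G(t)\le f(t)\paren{\mu(t)^{-1}-1}$, and substituting this back into $g(t)\le f(t)+G(t)$ gives
\[g(t)\le f(t)\,\mu(t)^{-1}=f(t)\exp\paren{\int_0^t h(s)\ds},\]
which is the assertion.

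Since every step is elementary, there is no genuine obstacle. The only points requiring a little care are that $g$ and $h$ (hence the product $gh$) be locally integrable, so that $G$ is well defined and absolutely continuous — guaranteed here by the continuity hypothesis — and that the integrating-factor computation be carried out on $G$, which is differentiable, rather than on $g$ itself, which need not be. It is also worth noting that this is a mild variant of the usual Gronwall lemma: the term $f$ appears outside the integral and is allowed to grow, which is exactly why the hypothesis $f'\ge 0$ is imposed and why the estimate $f(s)\le f(t)$ above is available.
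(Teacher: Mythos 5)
Your proof is correct: the integrating-factor argument applied to $G(t)=\int_0^t g(s)h(s)\,\mathrm{d}s$, together with the bound $f(s)\le f(t)$ coming from $f'\ge 0$ and the identity $\int_0^t\mu(s)h(s)\,\mathrm{d}s=1-\mu(t)$, is exactly the standard derivation of this form of Gronwall's inequality. The paper itself offers no proof, citing the lemma as classical, so your argument simply supplies the usual textbook reasoning and there is nothing to reconcile.
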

Furthermore, for a given integrable function $f\in L^1(\R^d)$, we shall denote the Fourier transform of $f$ by
\[\widehat{f}(\xi) = \int_{\R^d}f(x)e^{-2\pi i x\cdot \xi}\dx,\]
and throughout our analysis, we will use the classical Plancherel's theorem frequently.
\begin{lemma}\label{plan}
Suppose that $f\in L^1(\R^d)\cap L^2(\R^d)$. Then we have
\[\int_{\R^3}|f(x)|^2\dx=\int_{\R^3}|\widehat{f}(\xi)|^2\,{\rm{d}}\xi.\]
\end{lemma}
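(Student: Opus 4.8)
The plan is to prove the identity first for a conveniently chosen ``smooth'' building block and then transfer it to an arbitrary $f\in L^1(\R^d)\cap L^2(\R^d)$ via a Gaussian regularisation. The one genuinely delicate point — and, I expect, the main obstacle — is that the Fourier transform $\widehat f$ of a function in $L^1\cap L^2$ need not itself lie in $L^1$, so the Fourier inversion formula is not available directly; the regulariser is precisely what circumvents this.

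First I would record two elementary facts. (i) The multiplication formula: for any $f,g\in L^1(\R^d)$,
\[\int_{\R^d}\widehat f(\xi)\,g(\xi)\,\mathrm{d}\xi=\int_{\R^d}f(x)\,\widehat g(x)\,\mathrm{d}x,\]
which follows from Fubini's theorem because $(x,\xi)\mapsto f(x)g(\xi)e^{-2\pi i x\cdot\xi}$ is integrable on $\R^d\times\R^d$. (ii) The Gaussian identity: the function $\Phi_\varepsilon(x)=e^{-\pi\varepsilon|x|^2}$ satisfies $\widehat{\Phi_\varepsilon}(\xi)=\varepsilon^{-d/2}e^{-\pi|\xi|^2/\varepsilon}=:\Psi_\varepsilon(\xi)$, and $\{\Psi_\varepsilon\}_{\varepsilon>0}$ is an approximate identity: each $\Psi_\varepsilon$ is nonnegative, even, of unit mass, and concentrates at the origin as $\varepsilon\to0^+$.

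Now, given $f\in L^1(\R^d)\cap L^2(\R^d)$, set $\tilde f(x)=\overline{f(-x)}$ (again in $L^1\cap L^2$) and $h=f*\tilde f$. Since $f,\tilde f\in L^1$ we have $h\in L^1$, and since $f,\tilde f\in L^2$ the convolution $h$ is bounded and continuous, with $h(0)=\int_{\R^d}f(x)\overline{f(x)}\,\mathrm{d}x=\|f\|_2^2$; moreover $\widehat h=\widehat f\,\widehat{\tilde f}=\widehat f\,\overline{\widehat f}=|\widehat f|^2\ge0$. Applying the multiplication formula to $h$ and $\Phi_\varepsilon$ gives
\[\int_{\R^d}|\widehat f(\xi)|^2e^{-\pi\varepsilon|\xi|^2}\,\mathrm{d}\xi=\int_{\R^d}\widehat h(\xi)\,\Phi_\varepsilon(\xi)\,\mathrm{d}\xi=\int_{\R^d}h(x)\,\Psi_\varepsilon(x)\,\mathrm{d}x=(h*\Psi_\varepsilon)(0).\]
Letting $\varepsilon\to0^+$: the right-hand side tends to $h(0)=\|f\|_2^2$ because $h$ is bounded and continuous at $0$ and $\Psi_\varepsilon$ is an approximate identity, while the left-hand side tends to $\int_{\R^d}|\widehat f(\xi)|^2\,\mathrm{d}\xi$ by the monotone convergence theorem, the integrand being nonnegative with $e^{-\pi\varepsilon|\xi|^2}\uparrow1$. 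This yields the asserted equality (and, as a byproduct, that $\widehat f\in L^2(\R^d)$).

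As noted, the crux is dodging Fourier inversion in the absence of integrability of $\widehat f$: inserting $e^{-\pi\varepsilon|\xi|^2}$ simultaneously makes the multiplication formula applicable and converts the limit $\varepsilon\to0$ into a clean monotone-convergence statement on one side and an approximate-identity statement on the other. An alternative would be to prove the identity on the Schwartz class using the inversion formula and then invoke density of Schwartz functions in $L^2$, but that route forces one to reconcile the $L^2$-limit definition of the Fourier transform with the integral formula used in the statement, a complication the convolution argument avoids entirely.
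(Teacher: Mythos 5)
Your argument is correct, and it is worth noting that the paper itself offers no proof at all: Lemma~\ref{plan} is simply quoted as the classical Plancherel theorem (the mismatch between the hypothesis on $\R^d$ and the integrals written over $\R^3$ is a typo in the paper, not something you need to resolve). What you supply is the standard self-contained proof for $f\in L^1\cap L^2$: form $h=f*\tilde f$ with $\tilde f(x)=\overline{f(-x)}$, so that $h\in L^1$ is bounded and continuous with $h(0)=\|f\|_2^2$ and $\widehat h=|\widehat f|^2\ge 0$, then pair with the Gaussian $\Phi_\varepsilon$ via the multiplication formula and let $\varepsilon\to 0^+$, using monotone convergence on the frequency side and the approximate-identity property of $\Psi_\varepsilon=\widehat{\Phi_\varepsilon}$ at the point of continuity $x=0$. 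All the ingredients check out with the paper's normalization $\widehat f(\xi)=\int f(x)e^{-2\pi i x\cdot\xi}\dx$ (in particular $\widehat{\Phi_\varepsilon}=\Psi_\varepsilon$ and $\widehat{\tilde f}=\overline{\widehat f}$), and your route deliberately sidesteps Fourier inversion, which is unavailable since $\widehat f$ need not be integrable; as a bonus it shows $\widehat f\in L^2$, which is exactly what the Fourier-splitting arguments in Sections 3 and 4 need when they pass between $\|\ww\|_2$ and $\|\widehat\ww\|_2$. The only stylistic caveat is that monotone convergence is stated for sequences, so one should either note the monotonicity of $\varepsilon\mapsto e^{-\pi\varepsilon|\xi|^2}$ and pass to an arbitrary sequence $\varepsilon_n\downarrow 0$, or invoke dominated convergence once $\widehat f\in L^2$ is not yet known -- your monotone formulation handles this correctly since no integrability of the limit is presupposed.
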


The following decay properties of the heat equation is of independent interest and will be used in our later analysis (see, for instance, Proposition 3 in \cite{titi}).
\begin{lemma}\label{linear_decay}
For any real number $m\in\R$, assume that the initial data $\uu_0\in H^m(\R^d)$ satisfies
\begin{equation}\label{initial_decay}
\int_{\mathbb{S}^{d-1}}|\widehat{\uu}_0(r\omega)|^2\,{\rm{d}}\omega=Cr^{2\gamma-d}+o(r^{2\gamma-d})\quad{\rm{as}}\,\,r\rightarrow0,
\end{equation}
for some constant $C>0$. Then for some positive constants $C_1>0$ and $C_2>0$, the solution of the heat equation
\begin{align*}
\partial_t \uu-\Delta \uu&=0,\\
\uu(0,x)&=\uu_0(x),
\end{align*}
has the following decay properties: for sufficiently large $t>0$,
\[C_1(1+t)^{-\frac{m+\gamma}{2}}\leq\|\nabla^m e^{\Delta t}\uu_0\|_2\leq C_2(1+t)^{-\frac{m+\gamma}{2}}.\]
\end{lemma}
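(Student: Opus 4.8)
The plan is to work entirely on the Fourier side. Since $\widehat{e^{\Delta t}\uu_0}(\xi) = e^{-4\pi^2|\xi|^2 t}\widehat{\uu}_0(\xi)$, Plancherel's theorem gives
\[
\|\nabla^m e^{\Delta t}\uu_0\|_2^2 = \int_{\R^d} (2\pi|\xi|)^{2m} e^{-8\pi^2|\xi|^2 t}\,|\widehat{\uu}_0(\xi)|^2\dxi .
\]
(For general real $m$ one interprets $\nabla^m$ as the Fourier multiplier $(2\pi|\xi|)^m$; for the application $m\in\N$.) Passing to polar coordinates $\xi = r\omega$ with $r>0$ and $\omega\in\mathbb{S}^{d-1}$, and writing $\Phi(r) \defeq \int_{\mathbb{S}^{d-1}}|\widehat{\uu}_0(r\omega)|^2\,{\rm d}\omega$, this becomes
\[
\|\nabla^m e^{\Delta t}\uu_0\|_2^2 = (2\pi)^{2m}\int_0^\infty r^{2m+d-1} e^{-8\pi^2 r^2 t}\,\Phi(r)\,{\rm d}r .
\]
Hypothesis \eqref{initial_decay} says exactly that $\Phi(r) = Cr^{2\gamma-d} + o(r^{2\gamma - d})$ as $r\to 0$; note also that $\Phi \in L^1((0,\infty), r^{2m+d-1}\,{\rm d}r)$ because $\uu_0\in H^m(\R^d)$ forces $\int_0^\infty r^{2m+d-1}\Phi(r)\,{\rm d}r < \infty$, and one checks the exponent $2\gamma - d$ is such that the singularity at $r=0$ is integrable against $r^{2m+d-1}\,{\rm d}r$ (this needs $2m + 2\gamma > 0$, which holds in the regime of interest).

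Next I would rescale. Substituting $r = s/\sqrt{t}$ turns the integral into
\[
\|\nabla^m e^{\Delta t}\uu_0\|_2^2 = (2\pi)^{2m}\, t^{-\frac{2m+d}{2}} \int_0^\infty s^{2m+d-1} e^{-8\pi^2 s^2}\,\Phi\!\left(\tfrac{s}{\sqrt t}\right){\rm d}s .
\]
The claim is that the remaining integral converges, as $t\to\infty$, to $C\int_0^\infty s^{2m+d-1} e^{-8\pi^2 s^2}\, s^{2\gamma - d}\,{\rm d}s = C\, I_{m,\gamma}$, a finite positive constant. Indeed, since $\Phi(s/\sqrt t)\,t^{(d-2\gamma)/2} \to C s^{2\gamma-d}$ pointwise in $s$ by \eqref{initial_decay}, and after pulling out the factor $t^{(d-2\gamma)/2}$ the integrand is dominated (splitting into $s$ near $0$ using the asymptotics, and $s$ bounded away from $0$ using the $L^1$ bound on $\Phi$ plus the Gaussian) by a fixed integrable function, dominated convergence applies. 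Hence
\[
\|\nabla^m e^{\Delta t}\uu_0\|_2^2 = \big(C\,(2\pi)^{2m}\, I_{m,\gamma} + o(1)\big)\, t^{-\frac{2m+2\gamma}{2}} \quad\text{as } t\to\infty,
\]
and taking square roots, then absorbing the $t$-versus-$(1+t)$ discrepancy (harmless for large $t$), yields constants $0 < C_1 \le C_2$ with $C_1(1+t)^{-\frac{m+\gamma}{2}} \le \|\nabla^m e^{\Delta t}\uu_0\|_2 \le C_2(1+t)^{-\frac{m+\gamma}{2}}$ for all sufficiently large $t$, as desired.

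The main obstacle is the uniform domination needed to justify the limit: one must verify that $s^{2m+d-1} e^{-8\pi^2 s^2}\,t^{(d-2\gamma)/2}\Phi(s/\sqrt t)$ is bounded by a single $t$-independent integrable function on $(0,\infty)$. Near $s = 0$ this follows from the asymptotic expansion \eqref{initial_decay} (which gives a uniform bound $\Phi(\rho)\le C'\rho^{2\gamma - d}$ for $\rho$ small, hence the integrand is $\le C'' s^{2m+2\gamma - 1}$ there, integrable since $2m + 2\gamma > 0$); for $s$ in a bounded range away from $0$ one uses the local boundedness of $\rho \mapsto \rho^{d-2\gamma}\Phi(\rho)$ that \eqref{initial_decay} also provides; and for large $s$ the Gaussian $e^{-8\pi^2 s^2}$ crushes everything, after invoking $\int_0^\infty r^{2m+d-1}\Phi(r)\,{\rm d}r < \infty$ to control $\Phi(s/\sqrt t) \le t^{(2m+d)/2} s^{-(2m+d)}\!\int_0^\infty r^{2m+d-1}\Phi(r)\,{\rm d}r$ when $s/\sqrt t$ ranges over $(0,\infty)$. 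Once this bookkeeping is in place, the rest is the routine change of variables and dominated convergence sketched above; alternatively one can avoid dominated convergence entirely by splitting $\int_0^\infty = \int_0^{\delta\sqrt t} + \int_{\delta\sqrt t}^\infty$ and estimating each piece directly, which is the more elementary route and may be what the cited reference does.
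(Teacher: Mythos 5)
The paper offers no proof of this lemma --- it is quoted from Proposition 3 of \cite{titi} --- and your Fourier-side strategy (Plancherel, polar coordinates, the scaling $r=s/\sqrt t$, and identification of the limiting constant $C\,I_{m,\gamma}$) is exactly the standard argument behind such statements, so the overall route is the right one and the final asymptotics $\|\nabla^m e^{\Delta t}\uu_0\|_2^2=(C(2\pi)^{2m}I_{m,\gamma}+o(1))\,t^{-(m+\gamma)}$ is correct. Two corrections are needed, one cosmetic and one substantive. The cosmetic one: the normalizing power of $t$ is reversed in your text; since $\Phi(s/\sqrt t)\sim C s^{2\gamma-d}t^{(d-2\gamma)/2}$, the quantity that converges pointwise to $Cs^{2\gamma-d}$ is $t^{(2\gamma-d)/2}\Phi(s/\sqrt t)$, not $t^{(d-2\gamma)/2}\Phi(s/\sqrt t)$, and correspondingly the ``remaining integral'' itself tends to $0$; only after multiplying by $t^{(2\gamma-d)/2}$ does it tend to $C\,I_{m,\gamma}$. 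Your final displayed rate is consistent with the corrected bookkeeping, so this is a slip rather than an error of substance.

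The substantive gap is in the construction of a single $t$-independent dominating function for $s$ away from the origin. The bound $\Phi(s/\sqrt t)\le t^{(2m+d)/2}s^{-(2m+d)}\int_0^\infty r^{2m+d-1}\Phi(r)\,{\rm d}r$ is a pointwise bound of a function by its weighted integral and is false in general (nothing forces $r\mapsto r^{2m+d}\Phi(r)$ to be monotone), and likewise ``local boundedness of $\rho^{d-2\gamma}\Phi(\rho)$'' on bounded sets away from $0$ is not implied by the hypotheses: \eqref{initial_decay} controls $\Phi$ only as $r\to0$, and elsewhere $\Phi$ is merely integrable against $r^{2m+d-1}\,{\rm d}r$, not locally bounded. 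So the dominated-convergence argument, as set up over all of $(0,\infty)$, does not close. The repair is precisely the splitting you mention only as an afterthought, and it should be the main argument: choose $\delta>0$ so that $\Phi(\rho)\le C'\rho^{2\gamma-d}$ for $\rho\le\delta$; on $\{0<s\le\delta\sqrt t\}$ the normalized integrand is bounded by $C's^{2m+2\gamma-1}e^{-8\pi^2 s^2}$ (integrable since $2m+2\gamma>0$, as you note) and dominated convergence applies there; the remaining piece, after undoing the substitution, equals $t^{m+\gamma}\int_\delta^\infty r^{2m+d-1}e^{-8\pi^2 r^2 t}\Phi(r)\,{\rm d}r\le t^{m+\gamma}e^{-8\pi^2\delta^2 t}\int_\delta^\infty r^{2m+d-1}\Phi(r)\,{\rm d}r=o(1)$, using only $\uu_0\in H^m$. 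With that rearrangement your proof is complete and coincides with the standard one.
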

Note that if the solution $\uu$ of the heat equation is divergence-free at $t=0$, $\uu$ will remain divergence free for all $t>0$. This can be proved by taking the divergence operator to the equations and integrating against ${\rm{div}}\,\uu$.

Now, as we deal with the variable power-law index, we need to consider  the variable-exponent Lebesgue space. We write a set of all measurable functions $p:\Omega\rightarrow[1,\infty)$ by $\mathcal{P}(\Omega)$ and we shall call $p\in\mathcal{P}(\Omega)$ a variable exponent. Let $\Omega\subset\R^d$ be an open set and $p\in\mathcal{P}(\Omega)$. We then define $p^-={\rm{ess}}\,\inf_{x\in\Omega}p(x)$ and $p^+={\rm{ess}}\,\sup_{x\in\Omega}p(x).$ Also, for each measurable functions $f:\Omega\rightarrow\R$, we define the modular $|f|_{p(\cdot)}$ of $f$ by
\[|f|_{p(\cdot)}=\int_{\Omega}|f(x)|^{p(x)}\dx.\] 
Then we define the following variable-exponent Lebesgue spaces 
\begin{equation*}
L^{p(\cdot)}(\Omega)\defeq \left\{u\in L^1_{\rm{loc}}(\Omega):|u|_{p(\cdot)}<\infty\right\},
\end{equation*}
with the corresponding Luxembourg norms
\[
\norm{u}_{L^{p(\cdot)}(\Omega)}\defeq \inf\left\{\lambda>0:\bigg|\frac{u(x)}{\lambda}\bigg|_{p(\cdot)}\leq1\right\}
\]
If we assume that $1<p^-\leq p^+<\infty$, it is well-known that  the variable-exponent space $L^{p(\cdot)}(\Omega)$ is a reflexive, separable Banach space. We assume that  $1<p^-\leq p^+<\infty$ throughout the present paper.

In order to exploit various results from the theory of classical Lebesgue space, some regularity of the exponent $p(\cdot)$ is needed:  {\textit{{\rm{log}}-H\"older continuity}}.
\begin{definition}
We say a function $p:\Omega\rightarrow\R$ is {\textit{locally {\rm{log}}-H\"older continuous}} on $\Omega$ if there exists a positive constant $C_1>0$ satisfying for all $x,y\in\Omega$,
\begin{equation}\label{local_log_holder}
|p(x)-p(y)|\leq\frac{C_1}{{\rm{log}}(e+1/|x-y|)}.
\end{equation}
Furthermore, we call that $p$ satisfies the {\textit{{\rm{log}}-H\"older decay condition}} if there exist some constants $p_{\infty}\in\R$ and $C_2>0$ such that for all $x\in\Omega,$
\begin{equation}\label{log_holder_decay}
|p(x)-p_{\infty}|\leq\frac{C_2}{{\rm{log}}(e+|x|)}.
\end{equation}
We call that $p$ is {\textit{globally {\rm{log}}-H\"older continuous}} in $\Omega$ if both \eqref{local_log_holder} and \eqref{log_holder_decay} hold. We say $C_{\rm{log}}(p)\defeq\max\{C_1,C_2\}$ a log-{\textit{H\"older constant}} of $p$.
\end{definition}

\begin{definition}
We define the family of log-H\"older continuous variable-exponent functions:
\[\mathcal{P}^{\rm{log}}(\Omega)\defeq\left\{p\in\mathcal{P}(\Omega):\frac{1}{p}\,\,{\rm{is}}\,\,{\rm{globally}}\,\,{\textrm{log-H\"older}}\,\,{\rm{continuous}}\right\}.\]
If $\Omega$ is unbounded, we define $p_{\infty}$ by $\frac{1}{p_{\infty}}\defeq\lim_{|x|\rightarrow\infty}\frac{1}{p(x)}$.
\end{definition}
Here we note that, since $p\mapsto\frac{1}{p}$ is a bilipschitz mapping from $[p^-,p^+]$ to $\left[\frac{1}{p^+},\frac{1}{p^-}\right]$, if $p\in\mathcal{P}(\Omega)$ with $p^+<\infty$, it follows that $p\in\mathcal{P}^{\rm{log}}(\Omega)$ if and only if $p$ is globally H\"older continuous. For more details, one may see \cite{DHHR2011} as a comprehensive source of information for the theory of variable-exponent spaces.

Now, we introduce the following notation
\[\overline{\DD}\uu\defeq(1+|\DD\uu|^2)^{\frac{1}{2}},\]
and define the some special energies which will be used in the paper:
\begin{align*}
\mathcal{I}_{p}(\uu)(t) &=\int_{\R^d}(\overline{\DD}\uu)^{p(\cdot)-2}|\DD\uu|^2\dx, \\
\mathcal{J}_{p}(\uu)(t) &=\int_{\R^d}(\overline{\DD}\uu)^{p(\cdot)-2}|\nabla\DD\uu|^2\dx. \\
\end{align*}
Note that, since $|\nabla^2\uu|\leq3|\nabla\DD\uu|\leq6|\nabla^2\uu|$, $|\nabla\DD\uu|$ can be replaced by $|\nabla^2\uu|$ (and vise versa) in any appearance with the multiplication of some suitable constants.

In the present paper, for the simple presentation, we only focus on the case of three space dimensions. Note, however, that the theorem for the general case follows with same arguments. The existence of strong solutions of the equations \eqref{eq1}-\eqref{initial} can be found in various literatures. For instance, in \cite{strong_sol_per}, the local existence of strong solutions was proved under the condition $\frac{7}{5}<p^-\leq p^+\leq 2$ in a bounded domain with periodic boundary conditions. For the case of the whole space $\R^3$, in \cite{Cauchy} the model with a constant exponent $p$ was considered, and the existence of global-in-time strong solutions was established assuming that $p\geq\frac{11}{5}$.  For the case of variable-exponent power-law index, we can prove that the strong solutions exist in $\R^3$ to the equations \eqref{eq1}, \eqref{eq2} with
\begin{equation}\label{xdep}
\SSS(\DD\uu)=(1+|\DD\uu|^2)^{\frac{p(x)-2}{2}}\DD\uu,
\end{equation}
where the power-law index $p(\cdot)$ depends only on the spatial variable $x\in\R^3$. In fact, by assuming that $p\in W^{1,\infty}(\R^3)\cap\mathcal{P}^{\rm{log}}(\R^3)$ with $p^-\geq\frac{11}{5}$ and $\uu_0\in H^1(\R^3)$, if we follow the proof and apply the same arguments presented in \cite{Cauchy}, the result in \cite{Cauchy} can be extended to the case of variable power-law index in a straightforward manner and we obtain the existence of the global strong solutions of \eqref{eq1}, \eqref{eq2} and \eqref{xdep}. For further consideration, based on the arguments presented in \cite{Malek}, we can also prove that the global strong solutions to \eqref{eq1}, \eqref{eq2} and \eqref{xdep} exist provided that $p\in W^{1,\infty}(\R^3)\cap\mathcal{P}^{\rm{log}}(\R^3)$ with $p^->\frac{5}{3}$, under the smallness assumption of the initial data. The assumption that $p(\cdot)$ only depends of the space variable $x\in\R^3$ is important, as in this case, we can take time derivative  in the same way as for the constant-exponent case, and the existence of strong solutions follows in  a similar manner. The existence of global strong solutions of the model \eqref{eq1}-\eqref{ST} in the whole domain $\R^3$ where $p=p(t,x)$ depends on both time and space variables is still an open problem.  Here for the strong solutions of the equations \eqref{eq1}, \eqref{eq2} and \eqref{xdep}, we mean that
\[\uu\in L^{\infty}((0,T);H^1(\R^3)^3)\cap L^2((0,T); H^2(\R^3)^3),\]
\[|\nabla\uu|\in L^{\infty}((0,T);L^{p(\cdot)}(\R^3))\quad{\rm{and}}\quad \partial_t\uu\in L^2(Q_T)^3,\]
with the following energy inequalities:
\begin{align}
\sup_{0\leq t\leq T}\|\uu(t)\|^2_2+\int_0^T\mathcal{I}_{p}(\uu)(t)\dt &\leq \|\uu_0\|^2_2\label{energy_1},\\
\sup_{0\leq t\leq T}\|\nabla\uu(t)\|^2_2+\int_0^T\mathcal{J}_{p}(\uu)(t)\dt &\leq C(\|\uu_0\|_{H^1})\label{energy_2}.
\end{align}

In the present paper, we consider the aforementioned global-in-time strong solutions of the equations \eqref{eq1}, \eqref{eq2} and \eqref{xdep} under the the condition $p\in W^{1,\infty}(\R^3)\cap\mathcal{P}^{\rm{log}}(\R^3)$ and $p^-\geq\frac{11}{5}$. Note however, that the proof of our main theorem may also apply to the models with space-time-dependent power-law index, and therefore we can extend our main result to the case of the equations \eqref{eq1}-\eqref{ST} once the existence of corresponding strong solutions for $p=p(t,x)$ is established.

Before presenting our main result,  we recall the following decay estimate of the strong solutions of \eqref{eq1}, \eqref{eq2} and \eqref{xdep} , which is quoted from \cite{Ko2022}.
\begin{theorem}\label{prev_thm}
Assume that $\uu_0\in L^1(\R^3)\cap H^1(\R^3)$, and suppose that $p^-\geq\frac{11}{5}$. Then for the strong solutions $\uu$ of \eqref{eq1}, \eqref{eq2} and \eqref{xdep}, there exists some constant $C>0$ depending on the $L^1$ and $H^1$-norms of $\uu_0$ such that the following decay estimate holds:
\[\|\uu(t)\|_2\leq C(1+t)^{-\frac{3}{4}}\quad\forall t>0.\]
\end{theorem}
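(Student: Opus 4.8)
The plan is to run the Fourier splitting method of Schonbek together with a bootstrap (iterative) argument, after first rewriting the momentum equation so that the Newtonian part of the stress plays the role of the heat operator and the genuinely non-Newtonian part enters only as an integrable perturbation; I take $\boldsymbol{f}=0$, consistent with the energy identities \eqref{energy_1}--\eqref{energy_2}. First I would test \eqref{eq1} with $\uu$ and use \eqref{eq2} to remove the convective and pressure terms, obtaining $\tfrac{d}{dt}\|\uu\|_2^2+2\mathcal{I}_p(\uu)(t)=0$. Since $p^-\ge\tfrac{11}{5}>2$ we have $(\DDD\uu)^{p(\cdot)-2}\ge 1$ pointwise, hence $\mathcal{I}_p(\uu)(t)\ge\|\DD\uu\|_2^2$, and Korn's inequality gives
\[
\frac{d}{dt}\|\uu(t)\|_2^2+c\,\|\nabla\uu(t)\|_2^2\le 0 ,
\]
so that $\|\uu(t)\|_2\le\|\uu_0\|_2$ and, by \eqref{energy_1}, $\int_0^\infty\big(\|\nabla\uu\|_2^2+\mathcal{I}_p(\uu)\big)\,dt\le C\|\uu_0\|_2^2<\infty$.

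Next I would write $\SSS(\DD\uu)=\DD\uu+\big(\SSS(\DD\uu)-\DD\uu\big)$ and, using $\operatorname{div}\DD\uu=\tfrac12\Delta\uu$, recast \eqref{eq1} as $\partial_t\uu-\tfrac12\Delta\uu=-(\uu\cdot\nabla)\uu-\nabla\pi+\operatorname{div}(\SSS(\DD\uu)-\DD\uu)$. The crucial observation is that although $\SSS(\DD\uu)$ itself need not lie in $L^1(\R^3)$ when $p(\cdot)$ is large, the correction does: on $\{|\DD\uu|\le1\}$ one has $|\SSS(\DD\uu)-\DD\uu|=\big|(\DDD\uu)^{p(\cdot)-2}-1\big|\,|\DD\uu|\le C(p^+)\,|\DD\uu|^2$, while on $\{|\DD\uu|>1\}$ one has $|\SSS(\DD\uu)-\DD\uu|\le(\DDD\uu)^{p(\cdot)-1}\le C\,(\DDD\uu)^{p(\cdot)-2}|\DD\uu|^2$, whence
\[
\big\|\SSS(\DD\uu(t))-\DD\uu(t)\big\|_1\le C\big(\|\nabla\uu(t)\|_2^2+\mathcal{I}_p(\uu)(t)\big)=:h(t),\qquad \int_0^\infty h(t)\,dt<\infty ,
\]
the time-integrability coming from the previous paragraph. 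Taking Fourier transforms, and noting that the pressure is governed by $\operatorname{div}\operatorname{div}(\uu\otimes\uu-(\SSS(\DD\uu)-\DD\uu))$ since $\operatorname{div}\operatorname{div}\DD\uu=\Delta\operatorname{div}\uu=0$, while $\widehat{(\uu\cdot\nabla)\uu}$ and $\widehat{\operatorname{div}(\SSS(\DD\uu)-\DD\uu)}$ are bounded by $C|\xi|\,(\|\uu\|_2^2+h(t))$, an application of Gronwall's inequality (Lemma~\ref{gronwall}) to $t\mapsto\widehat\uu(t,\xi)$ yields, for $|\xi|\le g(t):=\sqrt{\beta/(1+t)}$,
\[
|\widehat\uu(t,\xi)|\le C\Big(\|\uu_0\|_1+|\xi|\int_0^t\big(\|\uu(s)\|_2^2+h(s)\big)\,ds\Big),
\]
the exponential factor being harmless on this frequency range.

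Then, combining Plancherel's theorem (Lemma~\ref{plan}) with the differential inequality above, for $g(t)$ as above I would obtain
\[
\frac{d}{dt}\|\uu\|_2^2+c\,g(t)^2\|\uu\|_2^2\le c\,g(t)^2\!\!\int_{|\xi|\le g(t)}\!\!|\widehat\uu(t,\xi)|^2\,d\xi ,
\]
and, after choosing $\beta$ large and multiplying by the integrating factor $(1+t)^{c\beta}$, the problem reduces to estimating $\int_{|\xi|\le g(s)}|\widehat\uu|^2\,d\xi$. Starting from the trivial bound $\|\uu\|_2\le\|\uu_0\|_2$, the previous paragraph gives $|\widehat\uu(t,\xi)|\le C(1+|\xi|t)$, hence $\int_{|\xi|\le g(s)}|\widehat\uu|^2\,d\xi\lesssim g(s)^3+g(s)^5s^2\lesssim(1+s)^{-1/2}$, which produces $\|\uu(t)\|_2^2\lesssim(1+t)^{-1/2}$. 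Feeding this back, the term $|\xi|\int_0^t\|\uu\|_2^2\,ds\lesssim|\xi|(1+t)^{1/2}$ is bounded for $|\xi|\le g(t)$, so $\widehat\uu$ is uniformly bounded on low frequencies and $\int_{|\xi|\le g(s)}|\widehat\uu|^2\,d\xi\lesssim g(s)^3\lesssim(1+s)^{-3/2}$; the integrating-factor estimate then gives $\|\uu(t)\|_2^2\lesssim(1+t)^{-3/2}$, i.e.\ the claimed bound $\|\uu(t)\|_2\le C(1+t)^{-3/4}$ (extended from large $t$ to all $t>0$ via $\|\uu\|_2\le\|\uu_0\|_2$). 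A further iteration only reproduces this exponent, which is the fixed point of the scheme and matches the decay of the linear heat semigroup for $L^1\cap H^1$ data (cf.\ Lemma~\ref{linear_decay}).

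The steps using the energy inequality and the Fourier splitting are the classical Schonbek scheme; the new ingredient, and the step I expect to be delicate, is the treatment of the non-Newtonian stress. Because the growth exponent $p(\cdot)$ is variable and possibly large, $\SSS(\DD\uu)$ is in general not integrable, so one must peel off its Newtonian part and control the remainder in $L^1(\R^3)$ purely by the dissipation $\mathcal{I}_p(\uu)$; verifying that the variable-exponent, log-Hölder structure does not obstruct these pointwise and modular estimates, nor the propagation of the resulting $L^1$-bound inside the Fourier argument, is where the care is needed, after which the splitting and iteration machinery goes through as in the constant-exponent case.
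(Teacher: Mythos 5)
Your proof is correct, and in outline it is exactly the machinery this paper relies on: note that the paper does not prove Theorem \ref{prev_thm} itself but quotes it from \cite{Ko2022}, and your scheme — energy inequality, the splitting $\SSS(\DD\uu)=\DD\uu+\GGG(\DD\uu)$, an $L^1$ bound on the correction, the low-frequency bound on $\widehat{\uu}$ via the variation-of-constants formula (what you call Gronwall is really Duhamel, but the bound you extract is the right one), and the Fourier-splitting bootstrap $(1+t)^{-1/4}\to(1+t)^{-3/4}$ — mirrors both the cited proof's method and the paper's own Section 3 treatment of the difference $\ww$. The one place where you genuinely depart from the paper's toolkit, and where your argument is actually sharper, is the $L^1$ estimate of the non-Newtonian part. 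Lemma \ref{main_est} bounds $(1+|\DD\vv|^2)^{\frac{p-2}{2}}-1\lesssim |\DD\vv|+|\DD\vv|^{p-2}$, which produces the term $\int_{\R^3}|\DD\vv|^{p^--1}\dx$, forces the case split at $p^-=3$, and requires the Gagliardo--Nirenberg interpolation together with \eqref{energy_2} (this is also where the growing factor $\left(\int_0^t\|\vv\|_2^{2\alpha/(2-\beta)}\ds\right)^{(2-\beta)/2}$ enters, feeding into the $p^-\geq\frac{17}{7}$ restriction downstream). You instead exploit the quadratic vanishing of $(1+s^2)^{\frac{p-2}{2}}-1$ at $s=0$: on $\{|\DD\uu|\le 1\}$ the correction is $O(|\DD\uu|^3)\le O(|\DD\uu|^2)$, and on $\{|\DD\uu|>1\}$ it is dominated by $(\DDD\uu)^{p-2}|\DD\uu|^2$, so that $\|\SSS(\DD\uu)-\DD\uu\|_1\lesssim \mathcal{I}_p(\uu)(t)$ with time-integrability coming from \eqref{energy_1} alone — no case distinction, no interpolation, valid for all $p^-\geq 2$. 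For the theorem at hand this just streamlines the proof, but it is worth noting that if this pointwise estimate were propagated through Lemma \ref{second_est}, the auxiliary terms in \eqref{repeat_2} would be uniformly bounded in time, which is precisely in the spirit of the relaxation toward $p^-\geq\frac{11}{5}$ that the conclusion of the paper leaves as an open question. The only other remarks are cosmetic: the exponent $-3/4$ indeed matches the heat semigroup on $L^1\cap L^2$ data, and extending from large $t$ to all $t>0$ via $\|\uu(t)\|_2\le\|\uu_0\|_2$ is fine.
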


Now we are ready to state our main theorems. Note here that the assumption $p\in\mathcal{P}^{\rm{log}}(\R^3)\cap W^{1,\infty}(\R^3)$ is needed for the existence of strong solutions, and not for the proof of Theorem \ref{main_thm}, and therefore will be omitted in the statement of our main theorem.
\begin{theorem}\label{main_thm}
Suppose that $\uu_0\in L^1(\R^3)\cap H^1(\R^3)$ and the initial perturbation $\ww_0\in L^1(\R^3)\cap H^1(\R^3)$ satisfies the decay condition \eqref{initial_decay} with  $2<\gamma<\frac{5}{2}$. Assume further that $p^-\geq\frac{17}{7}$. If we let $\uu$ be the strong solutions of the original problem \eqref{eq1}, \eqref{eq2} and \eqref{xdep}, and $\uuu$ be the strong solution of the perturbed problem \eqref{p_eq1}-\eqref{p_initial} with \eqref{xdep}, then we have the following optimal upper and lower bounds: for sufficiently large $t>0$,
\[C_1(1+t)^{-\frac{\gamma}{2}}\leq\|\uuu(t)-\uu(t)\|_2\leq C_2(1+t)^{-\frac{\gamma}{2}}.\]
\end{theorem}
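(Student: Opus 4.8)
# Proof Plan for Theorem 2.6

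The plan is to establish the upper and lower bounds separately, treating the difference $\ww = \uuu - \uu$ as the central object. Subtracting equation \eqref{eq1} from \eqref{p_eq1}, the perturbation $\ww$ satisfies
\[
\partial_t\ww + (\uuu\cdot\nabla)\uuu - (\uu\cdot\nabla)\uu - {\rm{div}}\,\big(\SSS(\DD\uuu)-\SSS(\DD\uu)\big) = -\nabla(\tilde\pi-\pi),\qquad {\rm{div}}\,\ww = 0,
\]
with $\ww(0,x)=\ww_0(x)$. The strategy is to show that for large time, $\ww$ behaves like the solution $e^{\Delta t}\ww_0$ of the linear heat equation, which by Lemma~\ref{linear_decay} (with $m=0$) decays exactly like $(1+t)^{-\gamma/2}$. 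The condition $\gamma<5/2$ is what makes the heat flow the slowest-decaying ingredient: the nonlinear and stress terms will be shown to decay strictly faster.

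\textbf{Upper bound.} First I would establish a crude energy decay for $\ww$. Testing the $\ww$-equation with $\ww$ itself, the pressure term drops by incompressibility, the convective terms rearrange (using ${\rm{div}}\,\uuu=0$) into a form controlled by $\|\nabla\ww\|_2$ and the decay of $\uu,\uuu$ from Theorem~\ref{prev_thm}, and the stress difference term is nonnegative up to lower-order corrections coming from the variable exponent (monotonicity of $\SSS$, which holds for $p^->1$, combined with log-Hölder control of $p(\cdot)$). This yields an energy inequality of the form $\frac{d}{dt}\|\ww\|_2^2 + c\,\mathcal I_p(\ww) \lesssim (\text{lower order})$. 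Then I would apply the generalized Fourier splitting method: split the frequency integral of $|\widehat\ww|^2$ at radius $\rho(t)^2 = C/(1+t)$, bound the low-frequency part via the integral (Duhamel/heat-kernel) representation of $\widehat\ww$, and bound the high-frequency part by the dissipation. This gives a first decay rate, say $\|\ww(t)\|_2 \lesssim (1+t)^{-\alpha_0}$ for some modest $\alpha_0$. The key point is then the \emph{iterative bootstrap}: feeding the current rate back into the Duhamel estimate for the low-frequency part improves the exponent, and one iterates $\alpha_{k+1} = \min\{\alpha_k + \tfrac{1}{4}, \tfrac{\gamma}{2}\}$ (roughly) until the rate saturates at $\gamma/2$, which is the rate of $e^{\Delta t}\ww_0$. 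The hypothesis $p^-\geq 17/7$ is precisely what is needed to guarantee the nonlinear error terms (products like $\uu\otimes\ww$ in $L^1\cap L^2$, plus the stress correction $\|\SSS(\DD\uuu)-\SSS(\DD\uu)\|$ estimated via the pointwise monotonicity bound and the $H^2$-regularity from \eqref{energy_2}) decay fast enough that the iteration is not obstructed before reaching $\gamma/2$.

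\textbf{Lower bound.} Here I would write the Duhamel formula $\ww(t) = e^{\Delta t}\ww_0 + \int_0^t e^{\Delta(t-s)}\mathbb{P}\,{\rm{div}}\,\big(\mathbf{G}(s)\big)\,ds$, where $\mathbb{P}$ is the Leray projector and $\mathbf{G}$ collects the quadratic convective term $\uu\otimes\uu - \uuu\otimes\uuu$ and the stress difference. By Lemma~\ref{linear_decay}, $\|e^{\Delta t}\ww_0\|_2 \geq C_1(1+t)^{-\gamma/2}$ for large $t$. For the remainder, the upper-bound analysis already supplies $\|\ww(t)\|_2 \lesssim (1+t)^{-\gamma/2}$ and the faster decay of each piece of $\mathbf G$; a heat-kernel estimate on $\int_0^t e^{\Delta(t-s)}\mathbb{P}\,{\rm{div}}\,\mathbf{G}\,ds$ then shows this term is $o\big((1+t)^{-\gamma/2}\big)$, using again $2<\gamma<5/2$ so that the time integral converges with room to spare. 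Subtracting, $\|\ww(t)\|_2 \geq \|e^{\Delta t}\ww_0\|_2 - o((1+t)^{-\gamma/2}) \geq \tfrac{C_1}{2}(1+t)^{-\gamma/2}$ for $t$ large.

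\textbf{Main obstacle.} I expect the hardest part to be controlling the stress-tensor difference ${\rm{div}}\,(\SSS(\DD\uuu)-\SSS(\DD\uu))$ with the \emph{variable} exponent $p(\cdot)$. Unlike the constant-$p$ case, $\SSS$ is not the gradient of a single convex potential with a clean monotonicity constant; one must use pointwise inequalities of the form $(\SSS(\mathbf A)-\SSS(\mathbf B)):(\mathbf A-\mathbf B) \gtrsim (1+|\mathbf A|+|\mathbf B|)^{p(\cdot)-2}|\mathbf A-\mathbf B|^2$ together with log-Hölder continuity of $p$ to absorb the spatially varying exponent, and then interpolate between the variable-exponent energies $\mathcal I_p,\mathcal J_p$ and the standard $L^2$-based Sobolev norms. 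Keeping track of these corrections through every step of the Fourier-splitting iteration, while ensuring the threshold $p^-\geq 17/7$ suffices at each stage, is the technical heart of the argument.
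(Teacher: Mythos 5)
Your upper-bound plan is essentially the paper's own argument: keep the full Laplacian/dissipation by exploiting $p^-\geq 2$ (the paper does this by writing $\SSS(\DD\uu)=\GGG(\DD\uu)+\DD\uu$ with $\GGG(\DD\uu)=((\DDD\uu)^{p(x)-2}-1)\DD\uu$ monotone), derive a pointwise bound on $|\widehat{\ww}(t,\xi)|$ from the Duhamel/ODE form of the Fourier-transformed equation, and run Fourier splitting with a bootstrap gaining $\tfrac14$ per step until it saturates at $\tfrac{\gamma}{2}$; the threshold $p^-\geq\tfrac{17}{7}$ enters through the $L^1$-in-time control of $\GGG$ when $p^-<3$, exactly as you anticipate. (One small correction: no log-H\"older regularity of $p(\cdot)$ is needed for the monotonicity step; the pointwise computation works for any measurable exponent with $p^-\geq2$, and log-H\"older is only used for the existence theory.)

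The genuine gap is in your lower bound. You propose the Duhamel representation $\ww(t)=e^{\Delta t}\ww_0+\int_0^t e^{\Delta(t-s)}\mathbb{P}\operatorname{div}\mathbf{G}(s)\,\mathrm{d}s$ and assert that "a heat-kernel estimate" shows the integral is $o((1+t)^{-\gamma/2})$ in $L^2$, but the available information does not support this near $s=t$. For the stress contribution you only control $\int_0^\infty\|\GGG(\DD\uuu)-\GGG(\DD\uu)\|_1\,\mathrm{d}s$ (Lemma \ref{main_est}); there is no pointwise-in-time decay and no $L^2$ bound, while the $L^1\to L^2$ semigroup estimate with one derivative costs $(t-s)^{-5/4}$, which is not integrable at $s=t$, and switching to the $(t-s)^{-1/2}\|\cdot\|_2$ bound requires $\|\GGG(\DD\uuu)-\GGG(\DD\uu)\|_2$, which you do not have. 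The convective piece has the same defect: near the diagonal one is forced to something like $\|\uuu\otimes\ww\|_2\lesssim\|\nabla\uuu\|_2\|\ww\|_2^{1/2}\|\nabla\ww\|_2^{1/2}$, and $\|\nabla\ww\|_2$ is only square-integrable in time, not pointwise decaying, so the resulting contribution decays far more slowly than $(1+t)^{-\gamma/2}$. The paper avoids precisely this by setting $\ppsi=\ww-\pphi$ with $\pphi=e^{\Delta t}\ww_0$ (so $\ppsi(0)=0$) and doing an energy/Fourier-splitting estimate on $\ppsi$: monotonicity of $\GGG$ annihilates the quadratic stress term, leaving only the cross term $\int_{\R^3}(\GGG(\DD\uuu)-\GGG(\DD\uu)):\DD\pphi\dx\lesssim\|\GGG(\DD\uuu)-\GGG(\DD\uu)\|_1\|\DD\pphi\|_\infty$, whose fast heat-flow factor $\|\DD\pphi\|_\infty\lesssim(1+t)^{-(5+2\gamma)/4}$ makes the merely time-integrated $L^1$ bound sufficient; this yields $\|\ppsi\|_2^2\lesssim(1+t)^{-(5+2\gamma)/4}$, which beats $(1+t)^{-\gamma}$ exactly because $\gamma<\tfrac52$, and the lower bound follows by comparison with $\|\pphi\|_2$. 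Without this device (or some substitute giving pointwise-in-time control of the stress difference), your lower-bound step does not close.
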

\end{section}

\begin{section}{Estimate for upper bounds}
We first define $\GGG(\DD\uu)\defeq \left((\DDD\uu)^{p(x)-2}-1\right)\DD\uu$ and rewrite the equations \eqref{p_eq1} and \eqref{eq1} as
\begin{align}
\partial_t\uuu+(\uuu\cdot\nabla)\uuu-{\rm{div}}\,\GGG(\DD\uuu)-\Delta\uuu&=-\nabla \tilde{\pi}+\boldsymbol{f}, \label{re_eq1} \\
\partial_t\uu+(\uu\cdot\nabla)\uu-{\rm{div}}\,\GGG(\DD\uu)-\Delta\uu&=-\nabla \pi+\boldsymbol{f}. \label{re_eq2}
\end{align}
Let us denote the difference between the perturbed strong solutions of the equations \eqref{p_eq1}-\eqref{p_initial} and the strong solutions of \eqref{eq1}-\eqref{initial} by
\begin{equation}\label{sol_diff}
	\ww(t,x)=\uuu(t,x)-\uu(t,x)\quad{\rm{and}}\quad r(t,x)=\tilde{\pi}(t,x)-\pi(t,x).
\end{equation}
Then we observe that $\ww(t,x)$ and $r(t,x)$ satisfy the following equations in the sense of distribution:
\begin{align}
\partial_t\ww+(\uuu\cdot\nabla)\ww+(\ww\cdot\nabla)\uu-\Delta\ww+\nabla r&={\rm{div}}\,\GGG(\DD\uuu)-{\rm{div}}\,\GGG(\DD\uu)\label{d_eq1}\\
{\rm{div}}\,\ww&=0, \label{d_eq2}\\
\ww(0,x)&=\ww_0(x).\label{d_initial}
\end{align}

We begin with the following lemma regarding the monotonicity of $\GGG$. In fact, various properties of the extra stress tensor $\SSS$ can be described in the framework of $p(\cdot)$-potential define by
\[F(x,s)=\int^s_0(1+a^2)^{\frac{p(x)-2}{2}}a\,{\rm{d}}s\quad{\rm{and}}\quad\Phi(x,\boldsymbol{B})=F(x,|\boldsymbol{B}|).\]
Here however, instead of presenting general theory based on $p(\cdot)$-potential, we only discuss the monotonicity of $\SSS$ and $\GGG$, which are required in our proof. For completeness, we also include the proof for the monotonicity of $\GGG$.

\begin{lemma}\label{G_mon}
Let $\GGG:\R^{d\times d}_{\rm{sym}}\rightarrow \R^{d\times d}_{\rm{sym}}$ be defined by $\GGG(\DD\uu)\defeq \left((\DDD\uu)^{p(x)-2}-1\right)\DD\uu$, where $\SSS$ is the extra stress tensor defined by \eqref{xdep}. If $p^-\geq 2$, then $\GGG$ is monotone in the sense that for all $\BB,\CC\in\R^{d\times d}_{\rm{sym}}$, we have
\[\left(\GGG(\BB)-\GGG(\CC)\right):(\BB-\CC)\geq 0.\]
\end{lemma}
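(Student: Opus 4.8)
The plan is to exploit the radial structure $\GGG(\MM)=a(\abs\MM)\,\MM$ and thereby reduce the matrix inequality to the monotonicity of a single scalar function. Fix $x\in\R^d$, write $p:=p(x)$ so that $p\ge p^-\ge 2$, and for $s\ge 0$ set $a(s):=(1+s^2)^{\frac{p-2}{2}}-1$. The two facts to keep in mind are that $a\ge 0$ precisely because $p\ge 2$, and that $\GGG(\MM)=a(\abs\MM)\,\MM$ for every $\MM\in\R^{d\times d}_{\rm{sym}}$.

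First I would expand the bilinear form: using $\BB:\CC=\CC:\BB$,
\[
\paren{\GGG(\BB)-\GGG(\CC)}:(\BB-\CC)=a(\abs\BB)\abs\BB^2+a(\abs\CC)\abs\CC^2-\paren{a(\abs\BB)+a(\abs\CC)}\,\BB:\CC.
\]
Since $a\ge 0$, the scalar $a(\abs\BB)+a(\abs\CC)$ is nonnegative, so the Cauchy--Schwarz inequality $\BB:\CC\le\abs\BB\,\abs\CC$ may be inserted with the correct sign, giving
\[
\paren{\GGG(\BB)-\GGG(\CC)}:(\BB-\CC)\ge a(\abs\BB)\abs\BB^2+a(\abs\CC)\abs\CC^2-\paren{a(\abs\BB)+a(\abs\CC)}\abs\BB\,\abs\CC=\paren{\abs\BB-\abs\CC}\paren{\rho(\abs\BB)-\rho(\abs\CC)},
\]
where $\rho(s):=s\,a(s)$. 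It therefore suffices to show that $\rho$ is non-decreasing on $[0,\infty)$.

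For this last step I would simply differentiate: writing $\rho(s)=s(1+s^2)^{\frac{p-2}{2}}-s$, one has
\[
\frac{d}{ds}\sqb{\,s(1+s^2)^{\frac{p-2}{2}}\,}=(1+s^2)^{\frac{p-2}{2}}+(p-2)s^2(1+s^2)^{\frac{p-4}{2}}\ge(1+s^2)^{\frac{p-2}{2}}\ge 1
\]
for all $s\ge 0$ (here both $p\ge 2$ and $1+s^2\ge 1$ are used), whence $\rho'(s)\ge 0$. Consequently $\abs\BB-\abs\CC$ and $\rho(\abs\BB)-\rho(\abs\CC)$ carry the same sign, the lower bound above is nonnegative, and the monotonicity of $\GGG$ follows.

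I do not anticipate a genuine obstacle: the only idea is to invoke Cauchy--Schwarz in the direction that $a\ge 0$ permits, after which the statement collapses to the one-line scalar estimate above. The hypothesis $p^-\ge 2$ is exactly what makes both of those steps go through, and it is used precisely at the two places indicated (nonnegativity of $a$, and $\rho'\ge 0$); without it the Cauchy--Schwarz inequality would have to be reversed. The analogous monotonicity of $\SSS$, which the paper quotes rather than proves, is obtained by the identical computation with $\rho(s)$ replaced by $s(1+s^2)^{\frac{p-2}{2}}$, whose derivative is $\ge 1$ by the same line.
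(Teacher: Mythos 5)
Your proof is correct, but it follows a genuinely different route from the paper. The paper proves the statement through the extra stress tensor $\SSS$: writing $\MM(s)=\CC+s(\BB-\CC)$ and differentiating $\SSS(\MM(s))$ along the segment, it obtains the strong monotonicity estimate $\left(\SSS(\BB)-\SSS(\CC)\right):(\BB-\CC)\geq|\BB-\CC|^2$ (the hypothesis $p^-\geq2$ enters by discarding the term with factor $p(x)-2$), and then monotonicity of $\GGG$ follows by subtracting $|\BB-\CC|^2$, since $\GGG(\BB)-\GGG(\CC)=\SSS(\BB)-\SSS(\CC)-(\BB-\CC)$. You instead exploit the radial structure $\GGG(\MM)=a(|\MM|)\MM$ with $a(s)=(1+s^2)^{\frac{p-2}{2}}-1\geq0$, insert Cauchy--Schwarz for the Frobenius product (legitimate precisely because $a\geq0$), and reduce everything to the monotonicity of the scalar function $\rho(s)=s\,a(s)$, verified by a one-line differentiation; your algebra and sign checks are all correct, and the inequality is pointwise in $x$ so fixing $p=p(x)$ is fine. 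Your argument is more elementary (no line-segment integral representation, no differentiation of a tensor-valued map), while the paper's argument buys more: it gives the quantitative coercivity $\geq|\BB-\CC|^2$, i.e.\ strong monotonicity of $\SSS$, whereas your lower bound $\left(|\BB|-|\CC|\right)\left(\rho(|\BB|)-\rho(|\CC|)\right)$ degenerates when $|\BB|=|\CC|$ with $\BB\neq\CC$ and hence only yields the sign. For the lemma as stated, the sign is all that is needed, so your proof suffices.
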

\begin{proof}
For $\BB,\CC\in\R^{d\times d}_{\rm{sym}}$, let us denote $\MM(s)=\CC+s(\BB-\CC)$. Then we have
\begin{align*}
&(\SSS(\BB)-\SSS(\CC)):(\BB-\CC) \\
& =\sum_{i,j}\left(\SSS(\BB)-\SSS(\CC)\right)_{ij}(\BB-\CC)_{ij}\\
& =\sum_{i,j,k,\ell}\int^1_0\partial_{k\ell}\left(\MM(s)_{ij}\right)\ds(\BB-\CC)_{ij}(\BB-\CC)_{k\ell}\\
& = \sum_{i,j,k,\ell}\int^1_0\left((p(x)-2)(1+|\MM|^2)^{\frac{p(x)-4}{2}}\MM_{k\ell}\MM_{ij}+(1+|\MM|^2)^{\frac{p(x)-2}{2}}\partial_{k\ell}\MM_{ij}\right)\ds (\BB-\CC)_{ij}(\BB-\CC)_{k\ell}\\
& =\int^1_0\left((p(x)-2)(1+|\MM|^2)^{\frac{p(x)-4}{2}}|\MM:(\BB-\CC)|^2+(1+|\MM|^2)^{\frac{p(x)-2}{2}}|\BB-\CC|^2\right)\ds\\
& \geq\int^1_0(1+|\MM|^2)^{\frac{p(x)-2}{2}}|\BB-\CC|^2\ds\geq|\BB-\CC|^2.
\end{align*}
Therefore, we conclude that for all $\BB,\CC\in\R^{d\times d}_{\rm{sym}}$, we have
\[\left(\GGG(\BB)-\GGG(\CC)\right):(\BB-\CC)=\left(\SSS(\BB)-\SSS(\CC)\right):(\BB-\CC)-|\BB-\CC|^2\geq0,\]
which implies the monotonicity of $\GGG$.
\end{proof}

Next, we shall derive an $L^1$ estimate for the modified stress tensor $\GGG$. Though the condition required in our main theorem is $p^-\geq\frac{17}{7}$, here however, we deal with a general discussion with $p^-\geq\frac{11}{5}$ where the existence of solutions if guaranteed. We will mention in  later analysis when the restricted condition $p^-\geq\frac{17}{7}$ is actually needed.
\begin{lemma}\label{main_est}
Assume that $p^-\geq\frac{11}{5}$ and $\vv$ is sufficiently smooth. Then there exist positive constants $C_1$, $C_2$ and $C_3$ such that the following inequality holds for all $t\in(0,T)$:
\begin{itemize}
\item {\rm{(Case\,\,$1$)}} $p^-\geq3$:
\[\int^t_0\int_{\R^3}|\GGG(\DD\vv)|\dx\ds\leq C_1,\]
\item {\rm{(Case\,\,$2$)}} $\frac{11}{5}\leq p^-<3$:
\[\int^t_0\int_{\R^3}|\GGG(\DD\vv)|\dx\ds\leq C_2 + C_3 \left(\int^t_0\|\vv(s)\|_2^{\frac{2\alpha}{2-\beta}}\ds\right)^{\frac{2-\beta}{2}},\]
\end{itemize}
where $\alpha=\frac{7-p^-}{4}$ and $\beta=\frac{5p^--11}{4}$.
\end{lemma}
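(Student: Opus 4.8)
The plan is to estimate the integrand $|\GGG(\DD\vv)| = \bigl|((\DDD\vv)^{p(x)-2}-1)\DD\vv\bigr|$ pointwise, splitting the spatial integral according to where $|\DD\vv|$ is small or large. First, I would observe that for $p(x) \ge 2$ one has the elementary bound $0 \le (1+a^2)^{(p(x)-2)/2} - 1 \le C(p^+)\,a^{\,p(x)-2}$ for $a \ge 0$ with a constant depending only on $p^+$ (using $p^+ < \infty$), so that $|\GGG(\DD\vv)| \lesssim |\DD\vv|^{p(x)-1}$. Since $p$ here depends only on $x$ and $\frac{11}{5} \le p^- \le p(x) \le p^+ < \infty$, on the region $\{|\DD\vv| \le 1\}$ we can bound $|\DD\vv|^{p(x)-1} \le |\DD\vv|^{p^--1}$, and on $\{|\DD\vv| > 1\}$ we can bound $|\DD\vv|^{p(x)-1} \le |\DD\vv|^{p^+-1}$; but a cleaner route that avoids $p^+$ altogether is to write $|\DD\vv|^{p(x)-1} \le (1 + |\DD\vv|^2)^{(p(x)-2)/2}|\DD\vv| \le (\DDD\vv)^{p(x)-2}|\DD\vv|^2$ on $\{|\DD\vv|\ge 1\}$ and relate this to the dissipation $\mathcal{I}_p(\vv)$. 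I would therefore split
\[
\int_{\R^3} |\GGG(\DD\vv)|\dx \;\lesssim\; \int_{\{|\DD\vv|\le 1\}} |\DD\vv|^{p^--1}\dx \;+\; \int_{\{|\DD\vv|> 1\}} (\DDD\vv)^{p(x)-2}|\DD\vv|^2\dx,
\]
where the second term is controlled by $\mathcal{I}_p(\vv)(s)$, whose time integral is bounded by $\|\vv_0\|_2^2$ via the energy inequality \eqref{energy_1}; this already disposes of the ``large'' part for all $p^- \ge \frac{11}{5}$, and in particular yields a constant contribution $C_1$.

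The work is in the ``small'' part $\int_{\{|\DD\vv|\le 1\}} |\DD\vv|^{p^--1}\dx$. When $p^- \ge 3$ we have $p^- - 1 \ge 2$, so on $\{|\DD\vv|\le 1\}$, $|\DD\vv|^{p^--1} \le |\DD\vv|^2$, and the time integral is again bounded by the dissipation, giving Case 1 with a pure constant $C_1$. When $\frac{11}{5} \le p^- < 3$ the exponent $p^- - 1 \in [\tfrac{6}{5}, 2)$ lies below $2$, so I would interpolate: write $|\DD\vv|^{p^--1} = |\DD\vv|^{(p^--1) - \theta\cdot 2}\cdot |\DD\vv|^{2\theta}$ is not quite the right split since we want to pair a power of $\DD\vv$ that integrates against the dissipation $(\DDD\vv)^{p^--2}|\DD\vv|^2$ (note $(\DDD\vv)^{p^--2}\ge 1$) with a Lebesgue norm of $\vv$ controlled by $\|\vv\|_2$ via the Gagliardo–Nirenberg / Sobolev interpolation $\|\DD\vv\|_q \lesssim \|\vv\|_2^{1-a}\|\nabla^2\vv\|_2^{a}$ for suitable $q$. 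Concretely I expect one uses Hölder in $x$ on $\{|\DD\vv|\le1\}$ to write $\int |\DD\vv|^{p^--1} \le \bigl(\int (\DDD\vv)^{p^--2}|\DD\vv|^2\bigr)^{\lambda}\bigl(\int |\DD\vv|^{\mu}\bigr)^{1-\lambda}$ for exponents $\lambda, \mu$ chosen so that $\lambda\cdot 2/1 \cdot(\text{...})$ matches $p^--1$; then bound the first factor by $\mathcal{I}_p(\vv)(s)$ and the second $\|\DD\vv\|_\mu^{\mu(1-\lambda)}$ by Korn plus Gagliardo–Nirenberg in terms of $\|\vv\|_2$ and $\|\nabla^2\vv\|_2$, finally integrating in time and applying Hölder in $s$ to separate the $\|\nabla^2\vv\|_2$ factor (time-integrable against the second energy inequality \eqref{energy_2}, or against $\mathcal{J}_p$) from the $\|\vv\|_2$ factor. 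Matching the bookkeeping should produce exactly the stated exponents $\alpha = \frac{7-p^-}{4}$ and $\beta = \frac{5p^--11}{4}$, with $\beta \ge 0$ precisely because $p^- \ge \frac{11}{5}$, and $2-\beta > 0$ because $p^- < 3$; this is why the borderline $p^- = \frac{11}{5}$ appears.

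The main obstacle, and the step requiring the most care, is the precise choice of the Hölder pair $(\lambda,\mu)$ and the Gagliardo–Nirenberg interpolation parameter in the subcritical case, so that (i) the power of the dissipation that appears is at most $1$ (so it is absorbed by $\int_0^t \mathcal{I}_p(\vv) \le \|\vv_0\|_2^2$ without creating extra time-integrability issues), (ii) the power of $\|\nabla^2\vv\|_2$ is time-integrable against \eqref{energy_2}, and (iii) what remains is exactly a power of $\int_0^t \|\vv(s)\|_2^{2\alpha/(2-\beta)}\ds$. Everything else — the pointwise bound on $\GGG$, Korn's inequality (Lemma 2.1), and the energy inequalities — is routine once this combinatorial matching is pinned down; I would carry it out by leaving the interpolation exponents as symbols, imposing the three constraints above as linear equations, and checking that the unique solution reproduces $\alpha$ and $\beta$.
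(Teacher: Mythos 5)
Your overall strategy --- bound $|\GGG(\DD\vv)|$ pointwise, absorb the large-gradient part into the dissipation $\mathcal{I}_p(\vv)$ via \eqref{energy_1}, and treat the remaining subcritical power by Gagliardo--Nirenberg interpolation plus H\"older in time against \eqref{energy_2} --- is the same as the paper's, and your Case 1 is complete (indeed simpler than the paper's, which interpolates $L^{p^--1}$ between $L^2$ and $L^{p^-}$ instead of using $|\DD\vv|^{p^--1}\le|\DD\vv|^2$ on $\{|\DD\vv|\le1\}$). The genuine gap is Case 2, which is the actual content of the lemma: you never fix the H\"older pair $(\lambda,\mu)$ or the interpolation parameter, and you only assert that the bookkeeping ``should'' reproduce $\alpha=\frac{7-p^-}{4}$ and $\beta=\frac{5p^--11}{4}$; that computation is precisely what is being asked for. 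Moreover, the extra H\"older-in-$x$ splitting against the dissipation is a misdirection: no such factor is needed. The paper applies Gagliardo--Nirenberg directly, $\|\nabla\vv\|_{p^--1}\lesssim\|\vv\|_2^{1-a}\|\nabla^2\vv\|_2^{a}$ with $a=\frac{5}{4}-\frac{3}{2(p^--1)}$, so that $(1-a)(p^--1)=\alpha$ and $a(p^--1)=\beta$, raises this to the power $p^--1$, and then uses H\"older in time with exponents $\frac{2}{2-\beta}$ and $\frac{2}{\beta}$; the factor $\bigl(\int_0^t\|\nabla^2\vv(s)\|_2^2\ds\bigr)^{\beta/2}$ is bounded by \eqref{energy_2} because $\mathcal{J}_p(\vv)\gtrsim\|\nabla^2\vv\|_2^2$. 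In your scheme the only choice consistent with the stated right-hand side is in effect $\lambda=0$; for $\lambda>0$ the dissipation factor enters the time integral and a three-way H\"older in $s$ would not in general return the stated power of $\int_0^t\|\vv(s)\|_2^{2\alpha/(2-\beta)}\ds$.

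A second, more minor, issue: the pointwise bound $(1+a^2)^{(p(x)-2)/2}-1\le C(p^+)\,a^{p(x)-2}$ is false when $p(x)>4$ and $a$ is small (the left side is of order $a^2$, the right of order $a^{p(x)-2}\ll a^2$), and the hypotheses allow $p^+>4$. The paper avoids this by using $(1+s)^{\theta}-1\lesssim s+s^{\theta}$, which keeps an extra term $|\DD\vv|^2$ that is itself controlled by $\mathcal{I}_p(\vv)$ since $\DDD\vv\ge1$. Your displayed splitting can be salvaged because on $\{|\DD\vv|\le1\}$ one has $|\GGG(\DD\vv)|\lesssim|\DD\vv|^3$, hence $\lesssim|\DD\vv|^{p^--1}$ for $p^-\le4$ and $\lesssim|\DD\vv|^2$ always, but as written your justification does not cover exponents $p(x)>4$.
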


\begin{proof}
By observing the inequality $(1+s)^{\alpha}-1\lesssim s+s^{\alpha}$ for $s\geq0$ and $\alpha>0$, we have that
\begin{align*}
\int_{\R^3}|\GGG(\DD\vv)|\dx
&\lesssim\int_{\R^3}\left((1+|\DD\vv|)^{p(x)-2}-1\right)|\DD\vv|\dx\\
&\lesssim\int_{\R^3}\left(|\DD\uu|+|\DD\uu|^{p(x)-2}\right)|\DD\uu|\dx\\
&\lesssim \int_{\R^3}|\DD\vv|^2\dx+\int_{\R^3}|\DD\vv|^{p(x)-1}\dx\\
&\lesssim \mathcal{I}_p(\vv)+\int_{\{|\DD\vv|\geq1\}}|\DD\vv|^{p(x)-1}\dx + \int_{\{|\DD\vv|<1\}}|\DD\vv|^{p(x)-1}\dx\\
&\lesssim \mathcal{I}_p(\vv)+\int_{\R^3}|\DD\vv|^{p(x)}\dx+\int_{\R^3}|\DD\vv|^{p^--1}\dx\\
&\lesssim \mathcal{I}_p(\vv)+\int_{\R^3}|\DD\vv|^{p^--1}\dx.
\end{align*}
Now it is enough to estimate the second integral on the right-hand side. For the case $p^-\geq 3$, by \eqref{energy_1} and the interpolation inequality,
\begin{align*}
\int^t_0\|\nabla\vv(s)\|^{p^--1}_{p^--1}\ds &\lesssim\int^t_0\|\nabla\vv(s)\|_2^{\frac{2}{p^--2}}  \|\nabla\vv(s)\|^{\frac{p^-(p^--3)}{p^--2}}_{p^-}\ds  \\
& \lesssim \|\nabla\vv\|_{L^2((0,T);L^2)}^{\frac{2}{p^--2}}  \|\nabla\vv\|_{L^{p^-}((0,T);L^{p^-})}^{\frac{p^-(p^--3)}{p^--2}} <\infty.
\end{align*}

For the case of $\frac{11}{5}\leq p^-<3$, by \eqref{energy_2} and Gagliardo--Nirenberg interpolation inequality (see, for example, \cite{GNI})
\begin{align*}
\int^t_0\|\nabla\vv(s)\|^{p^--1}_{p^--1}\ds 
& \lesssim  \int^t_0\|\vv(s)\|_2^{\alpha}\|\nabla^2\vv(s)\|_2^{\beta}\ds\\
& \lesssim \left(\int^t_0\|\vv(s)\|_2^{\frac{2\alpha}{2-\beta}}\ds\right)^{\frac{2-\beta}{2}}\left(\int^t_0\|\nabla^2\vv(s)\|_2^2\ds\right)^{\frac{\beta}{2}}\\
& \lesssim \left(\int^t_0\|\vv(s)\|_2^{\frac{2\alpha}{2-\beta}}\ds\right)^{\frac{2-\beta}{2}},
\end{align*}
where $\alpha=\frac{7-p^-}{4}$ and $\beta=\frac{5p^--11}{4}$, which completes the proof.
\end{proof}

Next, we shall derive the following estimate, which is needed for the Fourier splitting method.
\begin{lemma}\label{second_est}
Assume that $\uu_0\in H^1(\R^3)\cap L^1(\R^3)$ and $p^-\geq\frac{11}{5}$. Then for $\ww(t,x)$ defined in \eqref{sol_diff}, we have the following estimates:

\begin{itemize}
\item {\rm{(Case\,\,$1$)}} $p^-\geq3$:
\begin{equation}\label{repeat_1}
|\widehat{\ww}(t,\xi)|\lesssim |\widehat{\ww}_0(\xi)|+|\xi|\left(1+\int^t_0\|\ww(s)\|_2\ds\right).
\end{equation}
\item {\rm{(Case\,\,$2$)}} $\frac{11}{5}\leq p^-<3$:
\begin{equation}\label{repeat_2}
|\widehat{\ww}(t,\xi)|\lesssim |\widehat{\ww}_0(\xi)|+|\xi|\left(1+\left(\int^t_0\|\uuu(s)\|_2^{\frac{2\alpha}{2-\beta}}\ds\right)^{\frac{2-\beta}{2}} + \left(\int^t_0\|\uu(s)\|_2^{\frac{2\alpha}{2-\beta}}\ds\right)^{\frac{2-\beta}{2}}  +\int^t_0\|\ww(s)\|_2\ds\right).
\end{equation}
\end{itemize}
\end{lemma}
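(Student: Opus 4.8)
The plan is to take the Fourier transform of the difference equation \eqref{d_eq1}, treat the right-hand side and the nonlinear convective terms as forcing, and use Duhamel's formula together with the $L^\infty$-bound on the Fourier transform of an $L^1$-function. First I would rewrite \eqref{d_eq1} as a forced heat equation with pressure,
\[
\partial_t\ww-\Delta\ww+\nabla r=-(\uuu\cdot\nabla)\ww-(\ww\cdot\nabla)\uu+{\rm{div}}\,\GGG(\DD\uuu)-{\rm{div}}\,\GGG(\DD\uu)=:\operatorname{div}\HH,
\]
where, using ${\rm{div}}\,\uuu=0$ and ${\rm{div}}\,\uu=0$, each term on the right-hand side is a divergence: $(\uuu\cdot\nabla)\ww=\operatorname{div}(\uuu\otimes\ww)$, $(\ww\cdot\nabla)\uu=\operatorname{div}(\ww\otimes\uu)$, and the $\GGG$-terms are already in divergence form. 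Then I would apply the Fourier transform and the Leray projection to eliminate the pressure $r$; the resulting ODE in $\xi$ is $\partial_t\widehat{\ww}+4\pi^2|\xi|^2\widehat{\ww}=\widehat{\mathbb{P}\,\operatorname{div}\HH}$, whose solution by Duhamel's principle is
\[
\widehat{\ww}(t,\xi)=e^{-4\pi^2|\xi|^2 t}\widehat{\ww}_0(\xi)+\int_0^t e^{-4\pi^2|\xi|^2(t-s)}\,\widehat{(\mathbb{P}\,\operatorname{div}\HH)}(s,\xi)\,\mathrm ds.
\]
Since the symbol of $\mathbb{P}\,\operatorname{div}$ is bounded by $C|\xi|$, we get $|\widehat{\ww}(t,\xi)|\lesssim|\widehat{\ww}_0(\xi)|+|\xi|\int_0^t\|\HH(s)\|_1\,\mathrm ds$, using $\|\widehat g\|_\infty\le\|g\|_1$ for $g\in L^1$ and $e^{-4\pi^2|\xi|^2(t-s)}\le1$.

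Next I would bound $\|\HH(s)\|_1$ term by term. For the convective contributions, Cauchy--Schwarz gives $\|\uuu\otimes\ww\|_1\le\|\uuu\|_2\|\ww\|_2$ and $\|\ww\otimes\uu\|_1\le\|\ww\|_2\|\uu\|_2$; since $\|\uu\|_2\le\|\uu_0\|_2$ and $\|\uuu\|_2\le\|\uuu_0\|_2$ are bounded uniformly in time by the energy inequality \eqref{energy_1}, and $\|\uuu\|_2\le\|\uu\|_2+\|\ww\|_2$, these are all controlled by $C(1+\int_0^t\|\ww(s)\|_2\,\mathrm ds)$ after integrating in $s$ (in fact the instantaneous bound $\|\ww(s)\|_2$ already suffices, but integrating is harmless and matches the statement). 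For the stress contributions $\int_0^t\|\GGG(\DD\uuu)(s)\|_1\,\mathrm ds$ and $\int_0^t\|\GGG(\DD\uu)(s)\|_1\,\mathrm ds$, I would invoke Lemma \ref{main_est} directly: in Case 1 ($p^-\ge 3$) both integrals are bounded by a constant $C_1$, which yields \eqref{repeat_1}; in Case 2 ($\frac{11}{5}\le p^-<3$) each is bounded by $C_2+C_3\big(\int_0^t\|\vv(s)\|_2^{\frac{2\alpha}{2-\beta}}\,\mathrm ds\big)^{\frac{2-\beta}{2}}$ with $\vv=\uuu$ and $\vv=\uu$ respectively, which produces exactly the two bracketed terms in \eqref{repeat_2}. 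Collecting all pieces and absorbing constants gives the claimed inequalities.

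The only mildly delicate point is the pressure: one must be sure that applying the Fourier transform to \eqref{d_eq1} and projecting onto divergence-free fields legitimately removes $\nabla r$, i.e. that $\widehat{\nabla r}=2\pi i\xi\,\widehat r$ is radial in $\xi$ and hence killed by $\mathbb{P}=I-\xi\xi^{\mathsf T}/|\xi|^2$, which is immediate once we know $r$ is determined (up to the usual ambiguity) by the elliptic problem $-\Delta r=\operatorname{div}\operatorname{div}\HH$ obtained by taking the divergence of \eqref{d_eq1} and using \eqref{d_eq2}. Everything else is a routine combination of Duhamel's formula, the Hausdorff--Young bound $\|\widehat g\|_\infty\le\|g\|_1$, Cauchy--Schwarz, and the energy inequalities. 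I do not expect a genuine obstacle here; Lemma \ref{main_est} has already done the substantive analytic work, and this lemma is essentially its packaging into a pointwise-in-frequency estimate suitable for the Fourier splitting argument in the next step of the paper.
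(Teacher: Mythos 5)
Your proposal is correct and follows essentially the same route as the paper: Fourier transform of \eqref{d_eq1}, Duhamel's formula for the resulting ODE in $\xi$, the bound $\|\widehat{g}\|_\infty\le\|g\|_1$ applied to the divergence-form forcing (giving the factor $|\xi|$), Cauchy--Schwarz plus the energy bounds for the convective terms, and Lemma \ref{main_est} for the stress terms. The only cosmetic difference is the pressure: you remove it with the Leray projection, while the paper estimates $\widehat{\nabla r}$ directly from the Poisson equation obtained by taking the divergence of \eqref{d_eq1}; in Fourier variables these are the same symbol computation, so the arguments coincide.
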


\begin{proof}
Taking the Fourier transformation on \eqref{d_eq1} yields
\begin{equation}\label{ode}
\widehat{\ww}_t+|\xi|^2\widehat{\ww}=F(t,\xi)\quad{\rm{and}}\quad\widehat{\ww}_0(\xi)\defeq\widehat{\ww}(0,\xi)=\widehat{\ww}_0,
\end{equation}
where
\begin{equation}\label{aid_1}
F(t,\xi)\defeq\widehat{\nabla\cdot\GGG(\DD\uuu)}(t,\xi)-\widehat{\nabla\cdot\GGG(\DD\uu)}(t,\xi)-\widehat{(\uuu\cdot\nabla)\ww}(t,\xi)-\widehat{(\ww\cdot\nabla)\uu}(t,\xi)-\widehat{\nabla r}(t,\xi).
\end{equation}
For the initial condition, note that
\begin{equation}\label{ini_bd}
|\widehat{\ww}_0(\xi)|\leq \bigg|\int_{\R^3}e^{-ix\cdot\xi}\ww_0(x)\dx\bigg|\leq\int_{\R^3}|\ww_0(x)|\dx\leq C,
\end{equation}
Regarding the stress tensor terms, we observe that
\begin{equation}\label{aid_2_1}
\big|\widehat{\nabla\cdot\GGG(\DD\uuu)}(t,\xi)\big| = \bigg|\int_{\R^3}e^{-ix\cdot\xi}\nabla\cdot\GGG(\DD\uuu)\dx\bigg| \leq|\xi|\int_{\R^3}|\GGG(\DD\uuu)|\dx,
\end{equation}
and
\begin{equation}\label{aid_2_2}
\big|\widehat{\nabla\cdot\GGG(\DD\uu)}(t,\xi)\big| = \bigg|\int_{\R^3}e^{-ix\cdot\xi}\nabla\cdot\GGG(\DD\uu)\dx\bigg| \leq|\xi|\int_{\R^3}|\GGG(\DD\uu)|\dx.
\end{equation}

Furthermore, by using H\"older's inequality together with \eqref{eq2}, \eqref{p_eq2} and \eqref{d_eq2} we obtain
\begin{equation}\label{aid_3_1}
\big|\widehat{(\uuu\cdot\nabla)\ww}(t,\xi)\big|=\bigg|\int_{\R^3}e^{-ix\cdot\xi}\nabla\cdot(\uuu\otimes\ww)\dx\bigg|\leq|\xi|\|\uuu(t)\otimes\ww(t)\|_1\leq|\xi|\|\uuu(t)\|_2\|\ww(t)\|_2,
\end{equation}
and similarly we have
\begin{equation}\label{aid_3_2}
\big|\widehat{(\ww\cdot\nabla)\uu}(t,\xi)\big|=\bigg|\int_{\R^3}e^{-ix\cdot\xi}\nabla\cdot(\ww\otimes\uu)\dx\bigg|\leq|\xi|\|\ww(t)\otimes\uu(t)\|_1\leq|\xi|\|\ww(t)\|_2\|\uu(t)\|_2.
\end{equation}

Next, by taking divergence operator on \eqref{d_eq1}, we deduce that
\[\Delta r=\sum_{i,j}\frac{\partial^2}{\partial x_i \partial x_j}(-\uuu_i\ww_j-\ww_i\uu_j+\GGG(\DD\uuu)_{ij}-\GGG(\DD\uuu)_{ij}),\]
and therefore, by H\"older's inequality, we have
\begin{equation}\label{aid_4}
|\widehat{\nabla r}(t,\xi)|\leq    |\xi|\|\ww(t)\|_2\left(\|\uuu(t)\|_2+\|\uu(t)\|_2\right)+|\xi|\left(\|\GGG(\DD\uuu)\|_1+\|\GGG(\DD\uu)\|_1\right).
\end{equation}
Now, it follows from \eqref{ode} that
\[\widehat{\ww}(t,\xi)=e^{-|\xi|^2t}\widehat{\ww}_0(\xi)+\int^t_0F(s,\xi)e^{-|\xi|^2(t-s)}\ds.\]
Therefore, by \eqref{aid_1}-\eqref{aid_4}, Lemma \ref{main_est} and the fact that $\sup_{t>0}\left(\|\uuu(t)\|_2+\|\uu(t)\|_2\right) \leq C$,  we obtain the desired result.
\end{proof}

Now we prove the estimate for the upper bound in Theorem \ref{main_thm}. Multiplying \eqref{d_eq1} by $\ww$ and integrating over time yields that
\begin{align*}
\frac{1}{2}\frac{\rm{d}}{\dt}\|\ww\|^2_2+\|\nabla\ww\|^2_2
& = \int_{\R^3}\GGG(\DD\uu):\DD\ww\dx-\int_{\R^3}\GGG(\DD\uuu):\DD\ww\dx-\int_{\R^3}(\ww\cdot\nabla)\uu\cdot\ww\dx \\
& = -\int_{\R^3}\left(\GGG(\DD\uuu)-\GGG(\DD\uu)\right):(\DD\uuu-\DD\uu)\dx-\int_{\R^3}(\ww\cdot\nabla)\uu\cdot\ww\dx.
\end{align*}
The first term on the right-hand side is negative by Lemma \ref{G_mon}. For the second term on the right-hand side, by the interpolation inequality, Sobolev embedding and Young's inequality, we have
\begin{align*}
-\int_{\R^3}(\ww\cdot\nabla)\uu\cdot\ww\dx
& \leq \|\ww\|_2\|\nabla\uu\|_3\|\ww\|_6 \\
& \leq \|\ww\|_2\|\nabla\uu\|^{\frac{1}{2}}_2\|\nabla\uu\|^{\frac{1}{2}}_6\|\nabla\ww\|_2\\
& \leq  C(\varepsilon)\|\ww\|^2_2\|\nabla\uu\|_2\|\nabla^2\uu\|_2+\varepsilon\|\nabla\ww\|^2_2\\
& \leq C(\varepsilon)\|\ww\|^2_2\|\nabla\uu\|^2_2+C(\varepsilon)\|\ww\|^2_2\|\nabla^2\uu\|^2_2+\varepsilon\|\nabla\ww\|^2_2.
\end{align*}
Therefore, we obtain the inequality
\begin{equation}\label{d_energy}
\frac{\rm{d}}{\dt}\|\ww\|^2_2+\|\nabla\ww\|^2_2\lesssim\left(\|\nabla\uu\|^2_2+\|\nabla^2\uu\|^2_2\right)\|\ww\|^2_2.
\end{equation}

Now, let us denote $g(\uu,\ww)=\left(\|\nabla\uu\|^2_2+\|\nabla^2\uu\|^2_2\right)\|\ww\|^2_2$ and assume that $f(t)$ is a smooth function satisfying $f(0)=1$, $f(t)>0$ and $f'(t)>0$. If we multiply both sides of \eqref{d_energy} by $f(t)$, by Plancherel's theorem, for some constant $C_0>0$, we have
\begin{equation}\label{mid_ineq_const}
\frac{\rm{d}}{\dt}\left(f(t)\int_{\R^3}|\widehat{\ww}(t,\xi)|^2\dxi\right)+C_0f(t)\int_{\R^3}|\xi|^2|\widehat{\ww}(t,\xi)|^2\dxi\leq f'(t)\int_{\R^3}|\widehat{\ww}(t,\xi)|^2\dxi+f(t)g(\uu,\ww).
\end{equation}
We shall define the set $L(t)\defeq\{\xi\in\R^3:C_0|\xi|^2f(t)\leq f'(t)\}$ where $C_0>0$ is the constant in \eqref{mid_ineq_const}. Then we have
\[C_0f(t)\int_{\R^3}|\xi|^2|\widehat{\ww}(t,\xi)|^2{\rm{d}}\xi\geq C_0f(t)\int_{L(t)^{c}}|\xi|^2|\widehat{\ww}(t,\xi)|^2{\rm{d}}\xi\geq f'(t)\int_{\R^3}|\widehat{\ww}(t,\xi)|^2{\rm{d}}\xi-f'(t)\int_{L(t)}|\widehat{\ww}(t,\xi)|^2{\rm{d}}\xi,\]
Therefore, we have
\begin{equation}\label{mid_ineq_const_2}
\frac{\rm{d}}{\dt}\left(f(t)\int_{\R^3}|\widehat{\ww}(t,\xi)|^2\dxi\right)\leq f'(t)\int_{L(t)}|\widehat{\ww}(t,\xi)|^2\dxi+f(t)g(\uu,\ww).
\end{equation}
If we integrate \eqref{mid_ineq_const_2} over $(0,t)$, it follows that
\begin{equation}\label{mid_est}
f(t)\int_{\R^3}|\widehat{\ww}(t,\xi)|^2{\rm{d}}\xi\leq\int_{\R^3}|\widehat{\ww}_0(\xi)|^2{\rm{d}}\xi+\int^t_0 f'(s)\int_{L(s)}|\widehat{\ww}(s,\xi)|^2{\rm{d}}\xi\ds+\int^t_0f(s)g(\uu,\ww)\ds.
\end{equation}
(Case $1$) $p^-\geq3$: Now we set $f(t)=(1+t)^{4+\gamma}$. Then by Lemma \ref{second_est} with \eqref{mid_est} and Lemma \ref{linear_decay}, we obtain that
\begin{align*}
(1+t)^{4+\gamma}\|\ww(t)\|^2_2
& \lesssim\int_{\R^3}|\widehat{\ww}_0(\xi)|^2{\rm{d}}\xi+\int^t_0(1+s)^{3+\gamma}\int_{L(s)}|\widehat{e^{\Delta s}\ww_0}|^2{\rm{d}}\xi\ds\\
& \hspace{5mm}+\int^t_0(1+s)^{3+\gamma}\int_{L(s)}|\xi|^2\left(1+\int^s_0\|\ww(\tau)\|_2{\rm{d}}\tau\right)^2{\rm{d}}\xi\ds+\int^t_0(1+s)^{4+\gamma}g(\uu,\ww)\ds\\
& \lesssim1+\int^t_0(1+s)^{3+\gamma}\|e^{\Delta s}\ww_0\|^2_2\ds+\int^t_0(1+s)^{3+\gamma}\int_{L(s)}|\xi|^2\dxi\ds\\
& \hspace{5mm}+\int^t_0(1+s)^{3+\gamma}\int_{L(s)}|\xi|^2s^2{\rm{d}}\xi\ds+\int^t_0(1+s)^{4+\gamma}g(\uu,\ww)\ds\\
& \lesssim 1+\int^t_0(1+s)^{3}\ds+\int^t_0(1+s)^{\frac{1}{2}+\gamma}\ds +\int^t_0(1+s)^{\frac{5}{2}+\gamma}\ds+\int^t_0(1+s)^{4+\gamma}g(\uu,\ww)\ds\\
& \lesssim 1+(1+t)^{4}+(1+t)^{\frac{3}{2}+\gamma}+(1+t)^{\frac{7}{2}+\gamma}+\int^t_0(1+s)^{4+\gamma}g(\uu,\ww)\ds\\
& \lesssim (1+t)^{\frac{7}{2}+\gamma}+\int^t_0(1+s)^{4+\gamma}(\|\nabla\uu(s)\|^2_2+\|\nabla^2\uu(s)\|^2_2)\|\ww(s)\|^2_2\ds.
\end{align*}
Then by Gronwall's inequality, we obtain
\begin{align*}
(1+t)^{4+\gamma}\|\ww(t)\|^2_2
& \lesssim (1+t)^{\frac{7}{2}+\gamma}\exp\left(\int^{\infty}_0(\|\nabla\uu(s)\|_2^2+\|\nabla^2\uu(s)\|^2_2)\ds\right)\\
& \lesssim (1+t)^{\frac{7}{2}+\gamma}\exp\left(\int^{\infty}_0(\mathcal{I}_p(\uu)+\mathcal{J}_p(\uu))\ds\right),
\end{align*}
which, by \eqref{energy_1} and \eqref{energy_2}, implies that the decay rate
\begin{equation}\label{iter_1}
\|\ww(t)\|_2\lesssim(1+t)^{-\frac{1}{4}},\quad\forall t>0.
\end{equation}

Next, we substitute \eqref{iter_1} into \eqref{repeat_1} and proceed as above.  Then we have
\begin{align*}
(1+t)^{4+\gamma}\|\ww(t)\|^2_2
& \lesssim\int_{\R^3}|\widehat{\ww}_0(\xi)|^2{\rm{d}}\xi+\int^t_0(1+s)^{3+\gamma}\int_{L(s)}|\widehat{e^{\Delta s}\ww_0}|^2{\rm{d}}\xi\ds\\
& \hspace{5mm}+\int^t_0(1+s)^{3+\gamma}\int_{L(s)}|\xi|^2\left(1+\int^s_0(1+\tau)^{-\frac{1}{4}}{\rm{d}}\tau\right)^2{\rm{d}}\xi\ds+\int^t_0(1+s)^{4+\gamma}g(\uu,\ww)\ds\\
& \lesssim 1+\int^t_0(1+s)^{3}\ds+\int^t_0(1+s)^{\frac{1}{2}+\gamma}\ds +\int^t_0(1+s)^{2+\gamma}\ds+\int^t_0(1+s)^{4+\gamma}g(\uu,\ww)\ds\\
& \lesssim 1+(1+t)^{4}+(1+t)^{\frac{3}{2}+\gamma}+(1+t)^{3+\gamma}+\int^t_0(1+s)^{4+\gamma}g(\uu,\ww)\ds\\
& \lesssim (1+t)^{3+\gamma}+\int^t_0(1+s)^{4+\gamma}(\|\nabla\uu(s)\|^2_2+\|\nabla^2\uu(s)\|^2_2)\|\ww(s)\|^2_2\ds,
\end{align*}
from which we deduce that by Gronwall's inequality
\begin{equation}\label{iter_2}
\|\ww(t)\|_2\lesssim(1+t)^{-\frac{1}{2}},\quad\forall t>0.
\end{equation}

We shall repeat this process few more times. In the same way, putting \eqref{iter_2} into \eqref{repeat_1} yields
\begin{align*}
(1+t)^{4+\gamma}\|\ww(t)\|^2_2
& \lesssim (1+t)^{\frac{5}{2}+\gamma}+\int^t_0(1+s)^{4+\gamma}(\|\nabla\uu(s)\|^2_2+\|\nabla^2\uu(s)\|^2_2)\|\ww(s)\|^2_2\ds,
\end{align*}
which leads us to
\begin{equation}\label{iter_3}
\|\ww(t)\|_2\lesssim(1+t)^{-\frac{3}{4}},\quad\forall t>0.
\end{equation}
Likewise, we have in the same way that
\begin{align*}
(1+t)^{4+\gamma}\|\ww(t)\|^2_2
& \lesssim (1+t)^{2+\gamma}+\int^t_0(1+s)^{4+\gamma}(\|\nabla\uu(s)\|^2_2+\|\nabla^2\uu(s)\|^2_2)\|\ww(s)\|^2_2\ds,
\end{align*}
and
\begin{equation}\label{iter_3}
\|\ww(t)\|_2\lesssim(1+t)^{-1},\quad\forall t>0.
\end{equation}

Finally, if we repeat this process once more, we obtain that
\begin{align*}
(1+t)^{4+\gamma}\|\ww(t)\|^2_2
& \lesssim 1+\int^t_0(1+s)^{3}\ds+\int^t_0(1+s)^{\frac{1}{2}+\gamma}\ds\\
& \hspace{5mm}+\int^t_0(1+s)^{\frac{1}{2}+\gamma}\left(\log(1+s)\right)^2\ds+\int^t_0(1+s)^{4+\gamma}g(\uu,\ww)\ds\\
& \lesssim 1+(1+t)^{4}+(1+t)^{\frac{3}{2}+\gamma}+(1+t)^{4}+\int^t_0(1+s)^{4+\gamma}g(\uu,\ww)\ds\\
& \lesssim (1+t)^{4}+\int^t_0(1+s)^{4+\gamma}(\|\nabla\uu(s)\|^2_2+\|\nabla^2\uu(s)\|^2_2)\|\ww(s)\|^2_2\ds,
\end{align*}
where we have used the fact that $\gamma<\frac{5}{2}$ and $\log(1+x)\leq C_{\alpha} x^{\alpha}$ for arbitrarily small $\alpha>0$.
Therefore, we finally obtain that the desired decay rate
\begin{equation}\label{iter_final}
\|\uuu(t)-\uu(t)\|_2=\|\ww(t)\|_2\lesssim(1+t)^{-\frac{\gamma}{2}},\quad\forall t>0.
\end{equation}
(Case $2$) $\frac{17}{7}\leq p^-<3$: In this case, we first note that $\frac{1}{2}<\frac{2-\beta}{2}<1$ and $\frac{4\alpha}{2-\beta}>2$. Then by H\"older's inequality and \eqref{energy_1}, we obtain
\begin{align*}
\int^t_0(  &  1+s)^{3+\gamma}\int_{L(s)}|\xi|^2\left(1+\left(\int^s_0\|\uuu(\tau)\|_2^{\frac{2\alpha}{2-\beta}}{\rm{d}}\tau\right)^{\frac{2-\beta}{2}}+ \left(\int^s_0\|\uu(\tau)\|_2^{\frac{2\alpha}{2-\beta}}{\rm{d}}\tau\right)^{\frac{2-\beta}{2}}+\int^s_0\|\ww(\tau)\|_2{\rm{d}}\tau\right)^2{\rm{d}}\xi\ds\\
&\lesssim \int^t_0(1+s)^{3+\gamma}\int_{L(s)}|\xi|^2\,{\rm{d}}\xi\ds + \int^t_0(1+s)^{3+\gamma}\int_{L(s)}|\xi|^2\left(\int^s_0\|\uuu(\tau)\|_2^{\frac{2\alpha}{2-\beta}}\,{\rm{d}}\tau\right)^{2-\beta}\,{\rm{d}}\xi\ds\\
&\hspace{5mm}+ \int^t_0(1+s)^{3+\gamma}\int_{L(s)}|\xi|^2\left(\int^s_0\|\uu(\tau)\|_2^{\frac{2\alpha}{2-\beta}}\,{\rm{d}}\tau\right)^{2-\beta}\,{\rm{d}}\xi\ds\\
&\hspace{5mm}+\int^t_0(1+s)^{3+\gamma}\int_{L(s)}|\xi|^2\left(\int^s_0\|\ww(\tau)\|_2\,{\rm{d}}\tau\right)^2\,{\rm{d}}\xi\ds\\
&\lesssim \int^t_0(1+s)^{3+\gamma}\int_{L(s)}|\xi|^2\,{\rm{d}}\xi\ds + \int^t_0(1+s)^{3+\gamma}\int_{L(s)}|\xi|^2s^{\frac{2-\beta}{2}}\left(\int^s_0\|\uuu(\tau)\|_2^{\frac{4\alpha}{2-\beta}}\,{\rm{d}}\tau\right)^{\frac{2-\beta}{2}}\,{\rm{d}}\xi\ds\\
&\hspace{5mm}+ \int^t_0(1+s)^{3+\gamma}\int_{L(s)}|\xi|^2s^{\frac{2-\beta}{2}}\left(\int^s_0\|\uu(\tau)\|_2^{\frac{4\alpha}{2-\beta}}\,{\rm{d}}\tau\right)^{\frac{2-\beta}{2}}\,{\rm{d}}\xi\ds\\
&\hspace{5mm} +\int^t_0(1+s)^{3+\gamma}\int_{L(s)}|\xi|^2s^2\,{\rm{d}}\xi\ds\\
\end{align*}
Now we shall use Theorem \ref{prev_thm}. Then from the above inequality, we have
\begin{align*}
\int^t_0(  &  1+s)^{3+\gamma}\int_{L(s)}|\xi|^2\left(1+\left(\int^s_0\|\uuu(\tau)\|_2^{\frac{2\alpha}{2-\beta}}{\rm{d}}\tau\right)^{\frac{2-\beta}{2}}+ \left(\int^s_0\|\uu(\tau)\|_2^{\frac{2\alpha}{2-\beta}}{\rm{d}}\tau\right)^{\frac{2-\beta}{2}}+\int^s_0\|\ww(\tau)\|^2_2{\rm{d}}\tau\right)^2{\rm{d}}\xi\ds\\
&\lesssim \int^t_0(1+s)^{\frac{1}{2}+\gamma}\ds +\int^t_0(1+s)^{\frac{5}{2}+\gamma-\frac{3}{2}\alpha-\beta}\ds+\int^t_0(1+s)^{\frac{5}{2}+\gamma}\ds\\
&\lesssim(1+t)^{\frac{3}{2}+\gamma}+(1+t)^{\frac{7}{2}+\gamma-\frac{3}{2}\alpha-\beta}+(1+t)^{\frac{7}{2}+\gamma}.
\end{align*}

Note that $\frac{7}{2}-\frac{3}{2}\alpha-\beta\leq\frac{3}{2}$ as $p^-\geq\frac{17}{7}$. Therefore, as before, we obtain
\begin{align*}
(1+t)^{4+\gamma}\|\ww(t)\|^2_2
& \lesssim 1+(1+t)^{4}+(1+t)^{\frac{3}{2}+\gamma}+(1+t)^{\frac{7}{2}+\gamma-\frac{3}{2}\alpha-\beta}\\
&\hspace{5mm}+(1+t)^{\frac{7}{2}+\gamma}+\int^t_0(1+s)^{4+\gamma}g(\uu,\ww)\ds\\
& \lesssim (1+t)^{\frac{7}{2}+\gamma}+\int^t_0(1+s)^{4+\gamma}(\|\nabla\uu(s)\|^2_2+\|\nabla^2\uu(s)\|^2_2)\|\ww(s)\|^2_2\ds.
\end{align*}
Then by the use of Gronwall's inequality, we deduce that
\begin{align*}
(1+t)^{4+\gamma}\|\ww(t)\|^2_2
& \lesssim (1+t)^{\frac{7}{2}+\gamma}\exp\left(\int^{\infty}_0(\|\nabla\uu(s)\|_2^2+\|\nabla^2\uu(s)\|^2_2)\ds\right)\\
& \lesssim (1+t)^{\frac{7}{2}+\gamma}\exp\left(\int^{\infty}_0(\mathcal{I}_p(\uu)+\mathcal{J}_p(\uu))\ds\right),
\end{align*}
which lead us to by \eqref{energy_1} and \eqref{energy_2}, 
\begin{equation}\label{new_iter_1}
\|\ww(t)\|_2\lesssim(1+t)^{-\frac{1}{4}},\quad\forall t>0.
\end{equation}
We proceed similarly as we did for the case $p^-\geq 3$.  If we substitute \eqref{new_iter_1} into \eqref{repeat_2}, then we have
\begin{align*}
(1+t)^{4+\gamma}\|\ww(t)\|^2_2
& \lesssim 1+(1+t)^{4}+(1+t)^{\frac{7}{2}+\gamma-\frac{3}{2}\alpha-\beta}+(1+t)^{3+\gamma}+\int^t_0(1+s)^{4+\gamma}g(\uu,\ww)\ds\\
& \lesssim (1+t)^{3+\gamma}+\int^t_0(1+s)^{4+\gamma}(\|\nabla\uu(s)\|^2_2+\|\nabla^2\uu(s)\|^2_2)\|\ww(s)\|^2_2\ds,
\end{align*}
where we have used the fact that $\frac{7}{2}-\frac{3}{2}\alpha-\beta<3$.
Consequently we deduce that by Gronwall's inequality
\begin{equation}\label{new_iter_2}
\|\ww(t)\|_2\lesssim(1+t)^{-\frac{1}{2}},\quad\forall t>0.
\end{equation}

Likewise, if we put \eqref{new_iter_2} into \eqref{repeat_2}, we have due to the fact that $\frac{7}{2}-\frac{3}{2}\alpha-\beta<\frac{5}{2}$,
\begin{align*}
(1+t)^{4+\gamma}\|\ww(t)\|^2_2
& \lesssim (1+t)^{\frac{5}{2}+\gamma}+\int^t_0(1+s)^{4+\gamma}(\|\nabla\uu(s)\|^2_2+\|\nabla^2\uu(s)\|^2_2)\|\ww(s)\|^2_2\ds,
\end{align*}
and this gives us
\begin{equation}\label{new_iter_3}
\|\ww(t)\|_2\lesssim(1+t)^{-\frac{3}{4}},\quad\forall t>0.
\end{equation}
In the same way, again from the fact that $\frac{7}{2}-\frac{3}{2}\alpha-\beta<2$. we have that
\begin{align*}
(1+t)^{4+\gamma}\|\ww(t)\|^2_2
& \lesssim (1+t)^{2+\gamma}+\int^t_0(1+s)^{4+\gamma}(\|\nabla\uu(s)\|^2_2+\|\nabla^2\uu(s)\|^2_2)\|\ww(s)\|^2_2\ds,
\end{align*}
and
\begin{equation}\label{new_iter_4}
\|\ww(t)\|_2\lesssim(1+t)^{-1},\quad\forall t>0.
\end{equation}

Finally, repeating the same process once more leads us to
\begin{align*}
(1+t)^{4+\gamma}\|\ww(t)\|^2_2
& \lesssim 1+(1+t)^{4}+(1+t)^{\frac{7}{2}+\gamma-\frac{3}{2}\alpha-\beta}+(1+t)^{4}+\int^t_0(1+s)^{4+\gamma}g(\uu,\ww)\ds\\
& \lesssim (1+t)^{4}+\int^t_0(1+s)^{4+\gamma}(\|\nabla\uu(s)\|^2_2+\|\nabla^2\uu(s)\|^2_2)\|\ww(s)\|^2_2\ds,
\end{align*}
where we have used the fact that $\gamma<\frac{5}{2}$ and $\log(1+x)\leq C_{\alpha} x^{\alpha}$ for arbitrarily small $\alpha>0$. Note that this is exactly the point where $p^-\geq\frac{17}{7}$ is needed instead of $p^-\geq\frac{11}{5}$. 
Finally we conclude that the following decay estimate holds
\begin{equation}\label{new_iter_final}
\|\uuu(t)-\uu(t)\|_2=\|\ww(t)\|_2\lesssim(1+t)^{-\frac{\gamma}{2}},\quad\forall t>0.
\end{equation}

\end{section}

\begin{section}{Estimate for lower bounds}
In this section, we shall derive the explicit lower bound of convergence rate to the problem under consideration. Let us first denote the difference $\ppsi(t,x)=\ww(t,x)-\pphi(t,x)$ where $\ww(t,x)$ is the solution of the problem \eqref{d_eq1}-\eqref{d_initial} and $\pphi=e^{\Delta t}\ww_0$ be the solution of linear heat equations discussed in Lemma \ref{linear_decay}. Then the difference $\ppsi(t,x)$ satisfies the following system of PDEs in the distributional sense:
\begin{align}
\partial_t\ppsi+(\uuu\cdot\nabla)\ww+(\ww\cdot\nabla)\uu-\Delta\ppsi+\nabla r&={\rm{div}}\,\GGG(\DD\uuu)-{\rm{div}}\,\GGG(\DD\uu)\label{pot_eq1}\\
{\rm{div}}\,\ppsi&=0, \label{pot_eq2}\\
\ppsi(0,x)&=0.\label{pot_initial}
\end{align}
Note that since the heat equation preserves the divergence-free condition, we have \eqref{pot_eq2}. We start with the following lemma on the estimate of $|\widehat{\ppsi}|$. 
%Since the proof of the lemma is almost same as the proof of Lemma \ref{second_est}, we omit the proof.

\begin{lemma}\label{pot_second_est}
For the solution $\ppsi(t,x)$ defined in \eqref{pot_eq1}-\eqref{pot_initial}, we have the following estimates:
\begin{equation}\label{pot_repeat_1}
|\widehat{\ppsi}(t,\xi)|\lesssim|\xi|+|\xi|\int^t_0\|\ww(s)\|_2\ds.
\end{equation}
\end{lemma}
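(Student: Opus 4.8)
The plan is to follow the proof of Lemma~\ref{second_est} almost verbatim, with $\ww$ replaced by $\ppsi$ and with the simplification that $\ppsi$ starts from zero. First I would Fourier transform the equation \eqref{pot_eq1}. Since $\partial_t\ppsi-\Delta\ppsi$ transforms to $\widehat{\ppsi}_t+|\xi|^2\widehat{\ppsi}$, this gives the scalar ODE
\[
\widehat{\ppsi}_t+|\xi|^2\widehat{\ppsi}=F(t,\xi),\qquad \widehat{\ppsi}(0,\xi)=0,
\]
where $F$ collects exactly the same five contributions as in \eqref{aid_1}: the two modified-stress divergences $\widehat{\nabla\cdot\GGG(\DD\uuu)}$ and $\widehat{\nabla\cdot\GGG(\DD\uu)}$, the two convective terms $\widehat{(\uuu\cdot\nabla)\ww}$ and $\widehat{(\ww\cdot\nabla)\uu}$, and the pressure gradient $\widehat{\nabla r}$. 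Because the initial datum vanishes, Duhamel's formula yields the clean representation $\widehat{\ppsi}(t,\xi)=\int_0^t e^{-|\xi|^2(t-s)}F(s,\xi)\ds$ with \emph{no} surviving $e^{-|\xi|^2t}\widehat{\ww}_0(\xi)$ term; this is the only structural difference from Lemma~\ref{second_est}, and it is precisely why \eqref{pot_repeat_1} carries no $|\widehat{\ww}_0(\xi)|$ on the right-hand side.

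Next I would estimate $|F(s,\xi)|$ term by term. Every summand carries one spatial derivative, hence a factor $|\xi|$ in frequency (up to a universal constant): the stress terms are bounded by $|\xi|\,\|\GGG(\DD\uuu)(s)\|_1$ and $|\xi|\,\|\GGG(\DD\uu)(s)\|_1$ as in \eqref{aid_2_1}--\eqref{aid_2_2}; rewriting the convective terms as $\nabla\cdot(\uuu\otimes\ww)$ and $\nabla\cdot(\ww\otimes\uu)$ with the help of \eqref{eq2}, \eqref{p_eq2}, \eqref{d_eq2} and applying H\"older's inequality bounds them by $|\xi|\,\|\uuu(s)\|_2\|\ww(s)\|_2$ and $|\xi|\,\|\uu(s)\|_2\|\ww(s)\|_2$ as in \eqref{aid_3_1}--\eqref{aid_3_2}; and taking the divergence of \eqref{pot_eq1}, solving for $\widehat{r}$, and arguing as for \eqref{aid_4} bounds the pressure term by $|\xi|\big(\|\ww(s)\|_2(\|\uuu(s)\|_2+\|\uu(s)\|_2)+\|\GGG(\DD\uuu)(s)\|_1+\|\GGG(\DD\uu)(s)\|_1\big)$. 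Using $e^{-|\xi|^2(t-s)}\le 1$ and integrating in $s$ gives
\[
|\widehat{\ppsi}(t,\xi)|\lesssim |\xi|\int_0^t\big(\|\GGG(\DD\uuu)(s)\|_1+\|\GGG(\DD\uu)(s)\|_1\big)\ds+|\xi|\int_0^t\|\ww(s)\|_2\big(\|\uuu(s)\|_2+\|\uu(s)\|_2\big)\ds.
\]

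It then remains to absorb the first integral into the constant term $|\xi|$ and to discard the uniformly bounded factor $\|\uuu(s)\|_2+\|\uu(s)\|_2$ in the second; the latter is immediate from \eqref{energy_1}. For the former I would invoke Lemma~\ref{main_est}: if $p^-\ge 3$ it already gives $\int_0^t\int_{\R^3}|\GGG(\DD\vv)|\dx\ds\le C$, while if $\tfrac{17}{7}\le p^-<3$ I would combine the bound of Lemma~\ref{main_est} with the decay estimate $\|\uu(s)\|_2,\|\uuu(s)\|_2\lesssim (1+s)^{-3/4}$ of Theorem~\ref{prev_thm}; since $\alpha=\tfrac{7-p^-}{4}$ and $\beta=\tfrac{5p^--11}{4}$, the exponent arithmetic shows $\int_0^\infty\|\vv(s)\|_2^{\frac{2\alpha}{2-\beta}}\ds<\infty$ exactly when $p^-\ge\tfrac{17}{7}$, so again $\int_0^t\int_{\R^3}|\GGG(\DD\vv)|\dx\ds\le C$ uniformly in $t$. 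The only point needing genuine care is this last convergence of the time integral of the stress term — it is where the hypothesis $p^-\ge\tfrac{17}{7}$ enters — and everything else is a direct transcription of the argument already used for Lemma~\ref{second_est}.
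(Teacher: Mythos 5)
Your proposal is correct and follows essentially the same route as the paper: Fourier transform of \eqref{pot_eq1}, Duhamel's formula with vanishing initial datum so that no $|\widehat{\ww}_0(\xi)|$ term survives, the term-by-term bounds as in \eqref{aid_2_1}--\eqref{aid_4}, and, for $\frac{17}{7}\le p^-<3$, Theorem \ref{prev_thm} combined with Lemma \ref{main_est} to bound the stress contribution uniformly in time. The only cosmetic difference is that the paper bounds $\left(\int_0^t\|\vv(s)\|_2^{\frac{2\alpha}{2-\beta}}\ds\right)^{\frac{2-\beta}{2}}\lesssim(1+t)^{\frac{17-7p^-}{16}}\le C$ rather than asserting finiteness of the infinite time integral (which at the borderline $p^-=\frac{17}{7}$ is only logarithmically off, a point the paper's own computation treats the same way), but the exponent arithmetic is identical.
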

\begin{proof}
The proof is almost identical to the proof of Lemma \ref{second_est}. Hence the proof is done for the case $p^-\geq3$ as we have $\ppsi(0,x)=0$. For the case of $\frac{17}{7}\leq p^- <3$, we shall use Theorem \ref{prev_thm}, and then we have
\begin{align*}
\left(\int^t_0\|\uuu(s)\|^{\frac{2\alpha}{2-\beta}}\ds\right)^{\frac{2-\beta}{2}}+\left(\int^t_0\|\uu(s)\|^{\frac{2\alpha}{2-\beta}}\ds\right)^{\frac{2-\beta}{2}}
& \lesssim \left(\int^t_0(1+s)^{-\frac{3\alpha}{2(2-\beta)}}\right)^{\frac{2-\beta}{2}}\\
& \lesssim (1+t)^{\left(-\frac{3\alpha}{2(2-\beta)}+1\right)\left(\frac{2-\beta}{2}\right)}\\
& \lesssim (1+t)^{-\frac{3}{4}\alpha+\frac{2-\beta}{2}}\\
& \lesssim (1+t)^{\frac{17-7p^-}{16}}\leq C,
\end{align*}
where we have used the condition $\frac{17}{7}\leq p^-$ in the last inequality. Therefore, we have completed the proof.

\end{proof}

We shall now estimate the decay rate of weak solutions $\ppsi(t,x)$ and compare it with the decay rate of $\ww(t,x)$. If take $L^2$-inner product of \eqref{pot_eq1} with $\ppsi(t,x)$, by Korn's inequality, we have
\begin{align*}
\frac{1}{2}\frac{\rm{d}}{\dt}&\|\ppsi\|^2_2+\|\nabla\ppsi\|^2_2+\int_{\R^3}(\uuu\cdot\nabla)\ww\cdot\ppsi\dx+\int_{\R^3}(\ww\cdot\nabla)\uu\cdot\ppsi\dx\\
&=\int_{\R^3}\left[\GGG(\DD\uu):\DD\ppsi-\GGG(\DD\uuu):\DD\ppsi\right]\dx.
\end{align*}
Note first that by Lemma \ref{G_mon}, we obtain
\begin{align*}
&\hspace{5mm}\int_{\R^3}\left[\GGG(\DD\uu):\DD\ppsi-\GGG(\DD\uuu):\DD\ppsi\right]\dx\\
&=-\int_{\R^3}\left(\GGG(\DD\uuu)-\GGG(\DD\uu)\right):(\DD\uuu-\DD\uu)\dx+\int_{\R^3}\left(\GGG(\DD\uuu)-\GGG(\DD\uu)\right):\DD\pphi\dx\\
&\leq \int_{\R^3}\left(\GGG(\DD\uuu)-\GGG(\DD\uu)\right):\DD\pphi\dx,
\end{align*}
which leads us to the inequality
\begin{align*}
\frac{\rm{d}}{\dt}\|\ppsi\|^2_2+\|\nabla\ppsi\|^2_2 
&\lesssim\int_{\R^3}\left(\GGG(\DD\uuu)-\GGG(\DD\uu)\right):\DD\pphi\dx\\
& \hspace{3mm}-\int_{\R^3}(\uuu\cdot\nabla)\ww\cdot\ppsi\dx-\int_{\R^3}(\ww\cdot\nabla)\uu\cdot\ppsi\dx.
\end{align*}
For the first term on the right-hand side, by Gagaliardo--Nirenberg interpolation inequality (see, for example, \cite{GNI}),
\begin{align*}
 \int_{\R^3}\left(\GGG(\DD\uuu)-\GGG(\DD\uu)\right):\DD\pphi\dx
 &\lesssim\|\GGG(\DD\uuu)-\GGG(\DD\uu)\|_1\|\DD\pphi\|_{\infty}\\
 &\lesssim \|\GGG(\DD\uuu)-\GGG(\DD\uu)\|_1\|\pphi\|^{\frac{1}{6}}_2\|\nabla^3\pphi\|^{\frac{5}{6}}_2.
\end{align*}

For the second term, by H\"older's inequality, Young's inequality, Sobolev embedding and the skew symmetry of the convective term, we have for some small constants $\varepsilon>0$ and $\delta>0$,
\begin{align*}
-\int_{\R^3}(\uuu\cdot\nabla)\ww\cdot\ppsi\dx
&=\int_{\R^3}(\uuu\cdot\nabla)\ppsi\cdot\ww\dx=\int_{\R^3}(\uuu\cdot\nabla)\ppsi\cdot(\ppsi+\pphi)\dx\\
&\leq C\|\uuu\|_2\|\nabla\ppsi\|_2\|\pphi\|_{\infty}\leq\varepsilon\|\nabla\ppsi\|^2_2+C(\varepsilon)\|\uuu\|^2_2\|\nabla^{\frac{3}{2}+\delta}\pphi\|^2_2.
\end{align*}
Finally for the third term, note that
\[-\int_{\R^3}(\ww\cdot\nabla)\uu\cdot\ppsi\dx=\int_{\R^3}(\ww\cdot\nabla)\ppsi\cdot\uu\dx=\int_{\R^3}(\pphi\cdot\nabla)\ppsi\cdot\uu\dx+\int_{\R^3}(\ppsi\cdot\nabla)\ppsi\cdot\uu\dx\eqdef{\rm{I}}+{\rm{II}}.\]
By H\"older's inequality, Young's inequality, Sobolev embedding and the interpolation inequality, we obtain  for some small constants $\varepsilon>0$ and $\delta>0$,
\[{\rm{I}}\leq\|\pphi\|_{\infty}\|\nabla\ppsi\|_2\|\uu\|_2\leq\varepsilon\|\nabla\ppsi\|^2_2+C(\varepsilon)\|\pphi\|^2_{\infty}\|\uu\|^2_2\leq
\varepsilon\|\nabla\ppsi\|^2_2+C(\varepsilon)\|\uu\|^2_2\|\nabla^{\frac{3}{2}+\delta}\pphi\|^2_2,\]
and
\begin{align*}
{\rm{II}}=-\int_{\R^3}(\ppsi\cdot\nabla)\uu\cdot\ppsi\dx
&\leq\|\ppsi\|_2\|\nabla\uu\|_3\|\ppsi\|_6\\
&\leq C\|\ppsi\|_2\|\nabla\uu\|^{\frac{1}{2}}_2\|\nabla^2\uu\|^{\frac{1}{2}}_2\|\nabla\ppsi\|_2\\
&\leq C(\varepsilon)\|\ppsi\|^2_2\|\nabla\uu\|_2\|\nabla^2\uu\|_2+\varepsilon\|\nabla\ppsi\|^2_2\\
&\leq C\left(\|\nabla\uu\|^2_2+\|\nabla^2\uu\|^2_2\right)\|\ppsi\|^2_2+\varepsilon\|\nabla\ppsi\|^2_2.
\end{align*}
By combining above inequalities and using Lemma \ref{linear_decay} and the fact $\sup_{t>0}(\|\uuu(t)\|^2_2+\|\uu(t)\|^2_2)\leq C$, we have
\begin{align*}
\frac{\rm{d}}{\dt}\|\ppsi\|^2_2+\|\nabla\ppsi\|^2_2
& \lesssim\|\GGG(\DD\uuu)-\GGG(\DD\uu)\|_1\|\pphi\|^{\frac{1}{6}}_2\|\nabla^3\pphi\|^{\frac{5}{6}}_2\\
&\hspace{3mm}+\|\nabla^{\frac{3}{2}+\delta}\pphi\|^2_2+(\|\nabla\uu\|^2_2+\|\nabla^2\uu\|^2_2)\|\ppsi\|^2_2\\
& \lesssim (1+t)^{-\frac{5+2\gamma}{4}}\|\GGG(\DD\uuu)-\GGG(\DD\uu)\|_1\\
&\hspace{3mm}+f(t)(1+t)^{-\left(\frac{3}{2}+\delta+\gamma\right)}+(\|\nabla\uu\|^2_2+\|\nabla^2\uu\|^2_2)\|\ppsi\|^2_2.
\end{align*}
Let us denote $h(\uu,\ppsi)=\left(\|\nabla\uu\|^2_2+\|\nabla^2\uu\|^2_2\right)\|\ppsi\|^2_2$ and assume again that $f(t)$ is a smooth function satisfying $f(0)=1$, $f(t)>0$ and $f'(t)>0$. Then similarly as we did in \eqref{d_energy}--\eqref{mid_ineq_const}, for some constant $C_0>0$, we deduce that
\begin{align*}
&\hspace{5mm}\frac{\rm{d}}{\dt}\left(f(t)\int_{\R^3}|\widehat{\ppsi}(t,\xi)|^2\dxi\right)+C_0f(t)\int_{\R^3}|\xi|^2|\widehat{\ppsi}(t,\xi)|^2\dxi\\
&\leq f'(t)\int_{\R^3}|\widehat{\ppsi}(t,\xi)|^2\dxi+f(t) (1+t)^{-\frac{5+2\gamma}{4}}\|\GGG(\DD\uuu)-\GGG(\DD\uu)\|_1\\
&\hspace{3mm}+f(t)(1+t)^{-\left(\frac{3}{2}+\delta+\gamma\right)}+f(t)h(\uu,\ppsi).
\end{align*}
We shall define the set $M(t)\defeq\{\xi\in\R^3:C_0|\xi|^2f(t)\leq f'(t)\}$ where $C_0>0$ is the constant appearing in the above inequality. Then as we did before, we have 
\begin{align*}
&\hspace{5mm}\frac{\rm{d}}{\dt}\left(f(t)\int_{\R^3}|\widehat{\ppsi}(t,\xi)|^2\dxi\right)\\
&\leq f'(t)\int_{M(t)}|\widehat{\ppsi}(t,\xi)|^2\dxi+f(t) (1+t)^{-\frac{5+2\gamma}{4}}\|\GGG(\DD\uuu)-\GGG(\DD\uu)\|_1\\
&\hspace{3mm}+f(t)(1+t)^{-\left(\frac{3}{2}+\delta+\gamma\right)}+f(t)h(\uu,\ppsi).
\end{align*}
Now, setting $f(t)=(1+t)^{4+\gamma}$ and integrating over $(0,t)$ yields
\begin{align*}
(1+t)^{4+\gamma}\|\ppsi\|^2_2&\lesssim \int^t_0(1+s)^{3+\gamma}\int_{M(s)}|\widehat{\ppsi}(\xi)|^2\dxi\ds+(1+t)^{\frac{11+2\gamma}{4}}\\
&\hspace{3mm} +\int^t_0(1+s)^{\frac{5}{2}}\ds+\int^t_0(1+s)^{4+\gamma}h(\uu,\ppsi)\ds,
\end{align*}
where we have used Lemma \ref{main_est} and Theorem \ref{prev_thm} with the condition $\frac{17}{7}\leq p^-$ regarding the integrability of $\|\GGG(\DD\uuu)-\GGG(\DD\uu)\|_1$ in time. Now, by Lemma \ref{pot_second_est}, \eqref{iter_final} and the fact $2<\gamma<\frac{5}{2}$, we have
\begin{align*}
(1+t)^{4+\gamma}\|\ppsi\|^2_2
&\lesssim\int^t_0(1+s)^{3+\gamma}\int_{M(s)}|\xi|^2\dxi\ds+\int^t_0(1+s)^{3+\gamma}\int_{M(s)}|\xi|^2\left(\int^s_0\|\ww(\tau)\|\,{\rm{d}}\tau\right)^2\dxi\ds\\
&\hspace{5mm}+(1+t)^{\frac{11+2\gamma}{4}}+(1+t)^{\frac{7}{2}}+\int^t_0(1+s)^{4+\gamma}h(\uu,\ppsi)\ds\\
&\lesssim\int^t_0(1+s)^{3+\gamma}\int_{M(s)}|\xi|^2\dxi\ds+\int^t_0(1+s)^{3+\gamma}\int_{M(s)}|\xi|^2\left(\int^s_0(1+\tau)^{-\frac{\gamma}{2}}\,{\rm{d}}\tau\right)^2\dxi\ds\\
&\hspace{5mm}+(1+t)^{\frac{11+2\gamma}{4}}+(1+t)^{\frac{7}{2}}+\int^t_0(1+s)^{4+\gamma}h(\uu,\ppsi)\ds\\
&\lesssim \int^t_0(1+s)^{\frac{1}{2}+\gamma}\ds+\int^t_0(1+s)^{\frac{5}{2}}\ds+(1+t)^{\frac{11+2\gamma}{4}}+(1+t)^{\frac{7}{2}}+\int^t_0(1+s)^{4+\gamma}h(\uu,\ppsi)\ds\\
&\lesssim (1+s)^{\frac{3}{2}+\gamma}+(1+s)^{\frac{7}{2}}+(1+t)^{\frac{11+2\gamma}{4}}+(1+t)^{\frac{7}{2}}+\int^t_0(1+s)^{4+\gamma}h(\uu,\ppsi)\ds\\
&\lesssim (1+t)^{\frac{11+2\gamma}{4}}+\int^t_0(1+s)^{4+\gamma}(\|\nabla\uu\|^2_2+\|\nabla^2\uu\|^2_2)\|\ppsi\|^2_2.
\end{align*}
By Gronwall's inequality, we have
\begin{align*}
(1+t)^{4+\gamma}\|\ppsi\|^2_2
&\lesssim(1+t)^{\frac{11+2\gamma}{4}}\exp\left(\int^{\infty}_0\left(\|\nabla\uu(s)\|^2_2+\|\nabla^2\uu(s)\|^2_2\right)\dx\right)\\
&\lesssim(1+t)^{\frac{11+2\gamma}{4}}\exp\left(\int^{\infty}_0\left(\mathcal{I}_p(\uu)+\mathcal{J}_p(\uu)\right)\dx\right),
\end{align*}
which leads us to the following inequality:
\[\|\ppsi\|^2_2\lesssim (1+t)^{-\frac{5+2\gamma}{4}}.\]
Finally, by Lemma \ref{linear_decay}, we conclude for sufficiently large $t>0$ that
\begin{align*}
\|\uuu(t)-\uu(t)\|^2_2
&=\|\ww(t)\|^2_2=\|\ppsi(t)+\pphi(t)\|^2_2\geq\|\pphi(t)\|^2_2-\|\ppsi(t)\|^2_2\\
&\gtrsim(1+t)^{-\gamma}-(1+t)^{-\frac{5+2\gamma}{4}}\gtrsim(1+t)^{-\gamma},
\end{align*}
which is the desired estimate for the lower bound of convergence rate.
\end{section}

\begin{section}{Conclusion}
In this paper, we have concerned with the upper and lower bounds of convergence rates for the strong solutions of the $3$D generalized Newtonian fluids with variable power-law index. This mathematical model describes the rheological behaviour of an incompressible electrorheological fluid. In specific, we have obtained the following decay estimates: if the initial perturbation satisfies \eqref{initial_decay} with $2<\gamma<\frac{5}{2}$, we have
\[(1+t)^{-\frac{\gamma}{2}}\lesssim \|\uuu(t)-\uu(t)\|_2\lesssim(1+t)^{-\frac{\gamma}{2}},\]
for sufficiently large $t>0$. These estimates can be regarded as optimal in the sense that they coincide with those of the linear heat equations. The ideas of the proof include the comparison of the convergence rate for the given model with the one of heat equations for $t>0$ sufficiently large, together with the Fourier splitting methods and iterative process. In order to prove the main estimates we assumed that $p^-\geq\frac{17}{7}$ which was required to ensure that the itereation process reached to the optimal rate. Note, however, we still have a possibility to relax this condition and obtain the similar result with the mild restriction $p^-\geq\frac{11}{5}$, as presented in \cite{Ko2022} where optimal algebraic decay rate of the strong solutions of the model under consideration was obtained provided that $p^-\geq\frac{11}{5}$. We leave it as further consideration.

In fact, the argument used in the present paper can improve the existing results for the model with a constant-power-law index. For example, in \cite{perturb_non}, the authors established the same result for the bipolar generalized Navier--Stokes equations with fixed power-law index provided that $p\geq 3$. If we apply the same method presented in this paper for the case of $\frac{17}{7}\leq p<3$, it is straightforward that we can extend the result in \cite{perturb_non} to the case of $p\geq\frac{17}{7}$.

Another interesting future research direction is to derive the similar bounds of convergence rates for the derivative of the strong solutions. In \cite{Ko2022}, the optimal $L^2$ decay rate for the gradient of the strong solutions was also obtained. By employing suitable modifications of the arguments used in \cite{Ko2022}, we expect to obtain the following upper and lower bounds of convergence rates:
\[(1+t)^{-\frac{1+\gamma}{2}}\lesssim \|\nabla\uuu(t)-\nabla\uu(t)\|_2\lesssim(1+t)^{-\frac{1+\gamma}{2}},\]
which will be addressed in the forthcoming paper.
\end{section}

%\section*{Acknowledgements}
%Seungchan Ko's work was supported by the UK Engineering and Physical Sciences Research Council [EP/L015811/1].

\bibliography{references}
\bibliographystyle{abbrv}

%\section*{Appendix. Proof of Theorem \ref{thm:simple_system} and \ref{thm:geometric_length}}

\end{document}